\newcommand{\overbar}[1]{\mkern 1.5mu\overline{\mkern-1.5mu#1\mkern-1.5mu}\mkern 1.5mu}
\newcommand{\setdelim}{\; | \;}
\newcommand{\lspace}{L^2(X,m)}
\newcommand{\E}{\mathcal{E}}
\newcommand{\D}{\mathfrak{D}}
\crefname{property}{Property}{Properties}
\crefname{assum}{Assumption}{Assumptions}
\theoremstyle{plain}
\newtheorem{thm}{Theorem}[section]
\newtheorem{lem}[thm]{Lemma}
\newtheorem{corollary}[thm]{Corollary}
\theoremstyle{definition}
\newtheorem{defn}[thm]{Definition} 
\newtheorem{exmp}[thm]{Example} 
\newtheorem{remark}[thm]{Remark} 
\author{Burkhard Claus}
\affil{Department of Mathematics, TU Dresden}
\DeclareMathOperator{\normcap}{Cap_{\mathfrak{D}}}
\DeclareMathOperator{\supp}{supp}
\DeclareMathOperator{\sym}{sym}
\DeclareMathOperator{\epigraph}{epigraph}
\DeclareMathOperator{\convexhull}{Conv}
\DeclareMathOperator{\linspan}{span}
\DeclareMathOperator{\dom}{dom}
\title{Energy Spaces, Dirichlet Forms and Capacities in a Nonlinear Setting}
\date{July 2019}
\begin{document}

\maketitle

 \begin{abstract}
     In this article we study lower semicontinuous, convex functionals on real Hilbert spaces. In the first part of the article we construct a Banach space that serves as the energy space for such functionals. In the second part we study nonlinear Dirichlet forms, as defined by Cipriani and Grillo, and show, as it is well known in the bilinear case, that the energy space of such forms is a lattice. We define a capacity and introduce the notion quasicontinuity associated with these forms and prove several results, which are well known in the bilinear case.
 \end{abstract}

\section{Introduction}

The theory of gradients and subgradients of convex functions on a Hilbert space, as presented in \cite{Rockafellar_subgradients,Lions_subgradients} or \cite{Brezis_subgradients}, can be seen as a nonlinear counterpart to the theory of symmetric bilinear forms.

In the setting of Lions, one is given a Gelfand triple \( V \hookrightarrow H \hookrightarrow V'\) and a convex, differentiable function \(\E:V \rightarrow \mathbb{R}\). Then, the gradient is a nonlinear operator from \(V\) to \(V'\). In this situation, the Banach space \(V\) is sometimes called the energy space.

In the setting of Brezis, where \(\E : H \rightarrow [0,\infty]\) is merely a convex, lower semicontinuous function on the Hilbert space \(H\), such an energy space is not explicitly given. One goal of this article is to show that nevertheless a Banach space serving as an energy space can be constructed naturally, and therefore to partly unify the approaches of Lions and Brezis. In \cite{CM_metric_on_effective_domain} the authors equipped the effective domain of the functional with a metric but here we are interested in a linear space.

In the case of quadratic forms, this energy space is the domain of the bilinear form equipped with the usual Hilbert space structure, and in the general case the energy space is a Banach space. For example the energy space of the \(p\)-Laplace operator on an open domain \(X \subset \mathbb{R}^n\) with Neumann boundary conditions is the Sobolev space
\begin{align*}
    W^1_{p,2}(X)=\{ u \in \lspace \setdelim \nabla u \in L^p(X,m)\}.
\end{align*}

The second goal of this article is to study these energy spaces, when the functional is a nonlinear Dirichlet form.
These forms were introduced by Cipriani and Grillo in \cite{CG_Nonlinear_Dirichlet_Forms} as convex, lower semicontinuous functionals on an \(L^2\) space which generate order preserving and \(L^\infty\) contractive semigroups of nonlinear operators. Furthermore, they showed, using earlier results form Barthelemy \cite{Barthelemy_invariant_convex_sets} as well as Bénilan and Picard \cite{BC_completly_accretive_operators}, that this definition is equivalent to the intrinsic \cref{def:dirichlet_form} which we use in this paper.

In the theory of bilinear Dirichlet forms the Dirichlet spaces and the capacity are important building blocks of the vast theory of these forms and the semigroups as well as the Markov processes they generate \cite{FOT_Dirichlet_forms_Markov_Processes,BH_analysis_of_Dirichlet_forms,MR_nonsymmetric_Dirichlet_forms}.
We show that the Dirichlet space of a nonlinear Dirichlet form is a lattice and, under some assumptions, the lattice operation are continuous. We define a capacity and quasicontinuous functions and show that many of the results from the bilinear world still can be transferred to our setting.

In a forthcoming paper we want to use the results presented here to investigate boundary conditions and perturbations of Dirichlet forms.

We point out that in the article \cite{Biroli_strongly_local_nonlinear_dirichlet_forms,BV_strongly_local_nonlinear_dirichlet_forms} a capacity is defined in the nonlinear setting, too. The assumption \((H0)\) in both articles assumes that the Dirichlet space exists and that the functional is \(p\)-homogeneous. It turns out that both assumptions are not necessary to define a capacity and we give an explicit way to construct the Dirichlet space. In addition, non homogeneous examples like the energy of the \(\infty\)-Laplacian are also covered in our approach. Other examples include the \(p\) or \(p(x)\)-Laplacian on subsets of \(\mathbb{R}^d\) or arbitrary Riemannian manifolds, fractional versions of these operators and sums thereof.

\section{Energy Spaces of Symmetric Functionals}

In the following \(H\) is a real Hilbert space and \(\mathcal{E}: H \rightarrow [0,\infty]\) always denotes a convex and lower semicontinuous functional.
We define the effective domain
\begin{align*}
    \dom \E =\{ x \in H \setdelim \E(x) < \infty\},
\end{align*}
and call \(\E\)
\renewcommand{\labelenumi}{(\roman{enumi})}
\begin{enumerate}
 \item \underline{symmetric}, if \(\mathcal{E}(0)=0\) and \(
\mathcal{E}(-x)=\mathcal{E}(x)\)  for all \( x \in \lspace\),
\item \underline{quasilinear}, if \(\dom \E\) is a linear subspace of \(H\).
\end{enumerate}

\begin{remark}
For a convex functional \(\mathcal{E}: H \rightarrow [0,\infty]\), the condition \(
\mathcal{E}(-x)=\mathcal{E}(x)\)  already implies that \(0\) is a global minimizer of \(\mathcal{E}\). In the study of lower semicontinuous functionals in the context of partial differential equations and calculus of variations we are mostly interested in the minimizers and not the minimum itself. So without loss of generality we can assume that \(\E(0)=0\).
\end{remark}

We now construct a Banach space associated with the convex, lower semicontinuous functional \(\E\). This construction in based on the ideas of modular spaces used in the construction of Musielak-Orlicz space and variable exponent Lebesgue space. For reference of this procedure see \cite{Musielak_Modular_Spaces} or \cite{DHHR_Modular_Spaces}.

Let
\begin{align*}
    \mathcal{E}_1(x) \coloneqq \Vert x \Vert_H^2 + \mathcal{E}(x)
\end{align*}
and  \(\Vert \cdot \Vert_\mathfrak{D} : H \rightarrow [0,\infty]\) be defined as
\begin{align*}
\Vert x \Vert_\mathfrak{D} \coloneqq \inf\Big\{ \lambda>0 \setdelim \mathcal{E}_1\Big(\frac{x}{\lambda} \Big)\leq 1 \Big\}.
\end{align*}
We define the energy space of \(\E\) by
\begin{align*}
\mathfrak{D}& \coloneqq\{ x\in H \setdelim \exists \lambda>0 : \mathcal{E}_1(\lambda x)< \infty \}\\ &= \{x\in H \setdelim \Vert x \Vert_\D < \infty\}.
\end{align*}

\begin{thm}\label{dirichlet space is banach}
Let \(\mathcal{E}: H \rightarrow [0, \infty]\) be a symmetric, convex, lower semicontinuous functional on \(H\). Then the space \((\mathfrak{D},\Vert \cdot \Vert_\mathfrak{D})\) is a Banach space. In addition, the embedding \(i: \mathfrak{D} \rightarrow H\) is continuous and \(\Vert \cdot \Vert_\D\) is lower semicontinuous on \(H\).
\end{thm}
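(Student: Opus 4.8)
The plan is to recognize $\mathcal{E}_1=\Vert\cdot\Vert_H^2+\mathcal{E}$ as a convex modular on $H$ (in the sense of Musielak--Orlicz theory) and $\Vert\cdot\Vert_{\mathfrak{D}}$ as its Luxemburg gauge, and then to run the classical modular-space arguments in this abstract setting. The workhorse is the elementary absorption inequality: since $\mathcal{E}_1$ is convex with $\mathcal{E}_1(0)=0$, for every $x\in H$ and $0\le s\le 1$ we have $\mathcal{E}_1(sx)=\mathcal{E}_1\bigl(sx+(1-s)0\bigr)\le s\,\mathcal{E}_1(x)$. Consequently the set $\Lambda_x\coloneqq\{\lambda>0\setdelim \mathcal{E}_1(x/\lambda)\le 1\}$ is an interval unbounded to the right, so $\lambda\in\Lambda_x$ for every $\lambda>\Vert x\Vert_{\mathfrak{D}}$, and $x\in\mathfrak{D}$ iff $\Lambda_x\neq\emptyset$ iff $\Vert x\Vert_{\mathfrak{D}}<\infty$. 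Note also that $\mathcal{E}_1$ is lower semicontinuous on $H$, being the sum of the continuous function $\Vert\cdot\Vert_H^2$ and the lower semicontinuous functional $\mathcal{E}$.

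First I would verify that $\mathfrak{D}$ is a linear subspace: closure under scalar multiplication uses $\mathcal{E}_1(-y)=\mathcal{E}_1(y)$ (symmetry of $\mathcal{E}$) together with absorption; closure under addition follows by choosing $\lambda,\mu>0$ with $\mathcal{E}_1(\lambda x),\mathcal{E}_1(\mu y)<\infty$, setting $\nu=\tfrac12\min(\lambda,\mu)$, and bounding $\mathcal{E}_1(\nu(x+y))\le\tfrac12\mathcal{E}_1(2\nu x)+\tfrac12\mathcal{E}_1(2\nu y)$ via convexity and absorption. Then the norm axioms: definiteness follows from $\Vert y\Vert_H^2\le\mathcal{E}_1(y)$, which forces $\Vert x\Vert_H\le\lambda$ for all $\lambda\in\Lambda_x$, hence $\Vert x\Vert_H\le\Vert x\Vert_{\mathfrak{D}}$ and in particular $\Vert x\Vert_{\mathfrak{D}}=0\Rightarrow x=0$; positive homogeneity is a change of variables in the infimum, extended to negative scalars by symmetry; and the triangle inequality is the usual convex-combination estimate: for $\lambda>\Vert x\Vert_{\mathfrak{D}}$ and $\mu>\Vert y\Vert_{\mathfrak{D}}$, the point $\tfrac{x+y}{\lambda+\mu}=\tfrac{\lambda}{\lambda+\mu}\tfrac{x}{\lambda}+\tfrac{\mu}{\lambda+\mu}\tfrac{y}{\mu}$ lies in the unit sublevel set of $\mathcal{E}_1$, so $\Vert x+y\Vert_{\mathfrak{D}}\le\lambda+\mu$, and one lets $\lambda\downarrow\Vert x\Vert_{\mathfrak{D}}$, $\mu\downarrow\Vert y\Vert_{\mathfrak{D}}$. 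The inequality $\Vert x\Vert_H\le\Vert x\Vert_{\mathfrak{D}}$ already shows that $i\colon\mathfrak{D}\to H$ is a contraction, hence continuous.

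The lower semicontinuity of $\Vert\cdot\Vert_{\mathfrak{D}}$ on $H$ is where lower semicontinuity of $\mathcal{E}$ is genuinely used, and I expect this to be the main technical point (everything else is bookkeeping with the absorption inequality). Suppose $x_n\to x$ in $H$ and put $L=\liminf_n\Vert x_n\Vert_{\mathfrak{D}}$; we may assume $L<\infty$ and, along a subsequence, $\Vert x_n\Vert_{\mathfrak{D}}\to L$. Fix $\varepsilon>0$. For large $n$ we have $L+\varepsilon\in\Lambda_{x_n}$, i.e. $\mathcal{E}_1\bigl(x_n/(L+\varepsilon)\bigr)\le 1$; since $\mathcal{E}_1$ is lower semicontinuous and $x_n/(L+\varepsilon)\to x/(L+\varepsilon)$ in $H$, we obtain $\mathcal{E}_1\bigl(x/(L+\varepsilon)\bigr)\le\liminf_n\mathcal{E}_1\bigl(x_n/(L+\varepsilon)\bigr)\le 1$, so $\Vert x\Vert_{\mathfrak{D}}\le L+\varepsilon$. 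Letting $\varepsilon\downarrow 0$ proves $\Vert x\Vert_{\mathfrak{D}}\le L$.

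Completeness then follows formally. If $(x_n)$ is Cauchy in $(\mathfrak{D},\Vert\cdot\Vert_{\mathfrak{D}})$, it is bounded there and, since $i$ is a contraction, Cauchy in $H$; as $H$ is complete, $x_n\to x$ in $H$ for some $x\in H$. By the lower semicontinuity just established, $\Vert x\Vert_{\mathfrak{D}}\le\liminf_n\Vert x_n\Vert_{\mathfrak{D}}<\infty$, so $x\in\mathfrak{D}$. Finally, given $\varepsilon>0$ pick $N$ with $\Vert x_n-x_m\Vert_{\mathfrak{D}}<\varepsilon$ for all $m,n\ge N$; fixing $n\ge N$ and letting $m\to\infty$, lower semicontinuity applied to $x_n-x_m\to x_n-x$ in $H$ gives $\Vert x_n-x\Vert_{\mathfrak{D}}\le\varepsilon$. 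Hence $x_n\to x$ in $\mathfrak{D}$, and $(\mathfrak{D},\Vert\cdot\Vert_{\mathfrak{D}})$ is a Banach space.
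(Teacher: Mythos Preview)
Your argument is correct and follows essentially the same route as the paper: both establish $\Vert x\Vert_H\le\Vert x\Vert_{\mathfrak D}$, verify the norm axioms via convexity/symmetry of $\mathcal E_1$, deduce lower semicontinuity of $\Vert\cdot\Vert_{\mathfrak D}$ on $H$ from lower semicontinuity of $\mathcal E_1$, and then obtain completeness by passing a $\mathfrak D$-Cauchy sequence to its $H$-limit and applying lower semicontinuity. The only cosmetic difference is in the lsc step---the paper identifies $\{\Vert\cdot\Vert_{\mathfrak D}\le 1\}=\{\mathcal E_1\le 1\}$ and scales, whereas you give the equivalent direct sequential argument; your explicit use of the absorption inequality $\mathcal E_1(sx)\le s\,\mathcal E_1(x)$ to show $\Lambda_x$ is a right half-line is a nice clarification of something the paper uses tacitly.
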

\begin{proof}
Observe that for all \(x \in H\) we have
\begin{align*}
\Vert x \Vert_H= \inf\Big\{ \lambda>0 : \Big\Vert\frac{x}{\lambda} \Big\Vert^2_H\leq 1\Big\} \leq \inf\Big\{ \lambda>0 : \Big\Vert\frac{x}{\lambda} \Big\Vert^2_H+\mathcal{E}\Big(\frac{x}{\lambda} \Big)\leq 1 \Big\} =\Vert x \Vert_\mathfrak{D}.
\end{align*}

Since \(\Vert . \Vert_H\) is a norm, this yields \(\Vert x \Vert_\mathfrak{D}=0\) if and only if \(x=0\). 

To show homogeneity, let \(\mu>0\). Then,
\begin{align*}
\Vert \mu x \Vert_\D&= \inf\Big\{ \lambda>0 \setdelim \E_1\Big(\frac{\mu x}{\lambda} \Big)\leq 1 \Big\} \\
    &= \inf\Big\{ \lambda\mu>0 \setdelim \E_1\Big(\frac{\mu x}{\mu\lambda} \Big)\leq 1 \Big\} \\
    &= \inf\Big\{ \lambda\mu>0 \setdelim \E_1\Big(\frac{x}{\lambda} \Big)\leq 1 \Big\}\\
    &= \mu \inf\Big\{ \lambda>0 \setdelim \E_1\Big(\frac{x}{\lambda} \Big)\leq 1 \Big\} \\
    &= \mu \Vert x \Vert_\D.
\end{align*}
Homogeneity for \(\mu <0\) follows, since \(\E_1(x)=\E_1(-x)\) which implies \(\Vert -x\Vert_\D= \Vert x\Vert_\D\). 

For the triangle inequality, let \(x,y \in \D\) and \(a> \Vert x\Vert_\D, b > \Vert y\Vert_\D \). Then
\begin{align*}
    \E_1\left(\frac{x+y}{a+b}\right)&=
    \E_1\left(\frac{a}{a+b}\frac{x}{a}+\frac{b}{a+b}\frac{y}{b}\right)\\
    &\leq \frac{a}{a+b}\E_1\left(\frac{x}{a}\right)+\frac{b}{a+b} \E_1\left(\frac{y}{b}\right) \\
    &\leq 1.
\end{align*} 
Hence 
\begin{align*}
    \Vert x+y\Vert_\D \leq a+b.
\end{align*}
taking the infimum over all possible \(a\) and \(b\) yields 
\begin{align*}
    \Vert x+y\Vert_\D \leq \Vert x\Vert_\D+\Vert y\Vert_\D.
\end{align*}
Thus \( \Vert  \cdot \Vert_\D\) is a norm. 
Since we already showed that 
\begin{align*}
    \Vert x \Vert_H \leq \Vert x \Vert_\D,
\end{align*} we know that the embedding \(\iota :\mathfrak{D} \rightarrow H\) is continuous.

It is easy to see, that \(\E_1(x) \leq 1\) if and only if \( \Vert x \Vert_\D \leq 1\). Thus,
\begin{align*}
    \{ x \in H \setdelim \E_1(x) \leq 1\}=\{ x \in H \setdelim \Vert x \Vert_\D \leq 1\}.
\end{align*}
Since \(\E_1\) is lower semicontinous, the set 
\begin{align*}
    B_1=\{ x \setdelim \Vert x \Vert_\D \leq 1\}
\end{align*}
is closed in \(H\). Let \(\mu \geq 0\). Then
\begin{align*}
   \{ x \setdelim \Vert x \Vert_\D \leq \mu\}= \mu B_1
\end{align*}
is also closed. Therefore \(\Vert \cdot \Vert_\D\) is lower semicontinuous on H.

Let \((x_n)_n\) be a Cauchy sequence in \(\D\). Then \((x_n)_n\) is a Cauchy sequence in \(H \). Since \(H\) is complete, there is an \(x \in H\) with \(x_n \rightarrow_H x \) and, since the norm \(\Vert \cdot \Vert_\D\) is lower semicontinuous on \(\lspace\) and Cauchy sequences are bounded,
\begin{align*}
    \Vert x \Vert_\D \leq \liminf_{m\rightarrow \infty} \Vert x_m \Vert_\D < \infty.
\end{align*}
Hence \(x \in \D\).
Let \(\epsilon >0\). We choose \(N \in \mathbb{N}\) such that for every \(n,m \geq N\)
\begin{align*}
    \Vert x_n-x_m\Vert_\D <\epsilon.
\end{align*}
Thus, the lower semicontinuity of the norm yields
\begin{align*}
    \Vert x_n-x \Vert_\D \leq \liminf_{m\rightarrow \infty} \Vert x_n-x_m \Vert_\D < \epsilon
\end{align*}
for every \(n\geq N\). Therefore, \(x_n \rightarrow_\D x\). Thus, \(\D\) is complete.
\end{proof}

Let \(\alpha >0\),
\begin{align*}
    \Vert x \Vert_{\D,\alpha} = \inf\Big\{ \lambda>0 : \E_1\Big(\frac{x}{\lambda} \Big)\leq \alpha \Big\}.
\end{align*}
and
\begin{align*}
    \vert x \vert_\D=\inf\Big\{ \lambda>0 : \E\Big(\frac{x}{\lambda} \Big)\leq 1 \Big\}.
\end{align*}
Similar arguments as in the first part of the previous proof show, that \(\Vert \cdot \Vert_{\D,\alpha}\) are norms for every \(\alpha>0\) and \(\vert \cdot \vert_\D\) is a seminorm.

\begin{thm}\label{thm:equivalent_norms}
For every \(\alpha >0 \), the norms \(\Vert \cdot \Vert_{\D}, \Vert \cdot \Vert_{\D,\alpha}\) and \(\Vert \cdot \Vert_H + \vert \cdot \vert_\D\) are equivalent norms on \(\D\).
\end{thm}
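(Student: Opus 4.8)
The plan is to use transitivity of norm equivalence: I will show separately that $\Vert\cdot\Vert_{\D,\alpha}$ is equivalent to $\Vert\cdot\Vert_\D$ for each $\alpha>0$, and that $\Vert\cdot\Vert_H+\vert\cdot\vert_\D$ is equivalent to $\Vert\cdot\Vert_\D$. The only tool needed is the following elementary consequence of convexity together with $\E_1(0)=0$ (and likewise $\E(0)=0$): for $t\in[0,1]$ one has $\E_1(tx)=\E_1\big(tx+(1-t)\cdot 0\big)\le t\,\E_1(x)$, so $t\mapsto\E_1(tx)$ is nondecreasing on $[0,\infty)$ and the sub-level sets along a ray through the origin are upward-closed half-lines; the same holds for $\E$. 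Combined with lower semicontinuity of $\E_1$ this yields the unit-ball characterisations $\Vert x\Vert_\D\le\mu\iff\E_1(x/\mu)\le1$, $\Vert x\Vert_{\D,\alpha}\le\mu\iff\E_1(x/\mu)\le\alpha$, and $\vert x\vert_\D\le\mu\iff\E(x/\mu)\le1$, exactly as in the proof of \cref{dirichlet space is banach}; I use these freely below, and I take for granted that all three expressions are already known to be norms (resp.\ seminorm plus norm).

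For the first equivalence, fix $\alpha>0$ and take $\lambda$ with $\E_1(x/\lambda)\le\alpha$. If $\alpha\ge1$, then $1/\alpha\le1$, so $\E_1\big(\tfrac{x}{\alpha\lambda}\big)=\E_1\big(\tfrac1\alpha\cdot\tfrac x\lambda\big)\le\tfrac1\alpha\E_1\big(\tfrac x\lambda\big)\le1$, hence $\Vert x\Vert_\D\le\alpha\lambda$; optimising over $\lambda$, and noting that every $\lambda$ admissible for $\Vert x\Vert_\D$ is admissible for $\Vert x\Vert_{\D,\alpha}$ since $\alpha\ge1$, we obtain $\Vert x\Vert_{\D,\alpha}\le\Vert x\Vert_\D\le\alpha\Vert x\Vert_{\D,\alpha}$. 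If $\alpha<1$, the symmetric computation (now scaling a $\lambda$ with $\E_1(x/\lambda)\le1$ by the factor $1/\alpha>1$ and using $\E_1(\alpha y)\le\alpha\E_1(y)$) gives $\alpha\Vert x\Vert_{\D,\alpha}\le\Vert x\Vert_\D\le\Vert x\Vert_{\D,\alpha}$. In both cases the norms are equivalent, with explicit constants $\min(1,\alpha)$ and $\max(1,\alpha)$.

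For the second equivalence, one direction is immediate: $\Vert x\Vert_H\le\Vert x\Vert_\D$ was already shown, and since $\E\le\E_1$ pointwise, $\E_1(x/\lambda)\le1$ forces $\E(x/\lambda)\le1$, whence $\vert x\vert_\D\le\Vert x\Vert_\D$; adding yields $\Vert x\Vert_H+\vert x\vert_\D\le2\Vert x\Vert_\D$. For the converse, let $a>\Vert x\Vert_H$, $b>\vert x\vert_\D$, and put $s:=\sqrt2\,a+2b$. Then $\Vert x/s\Vert_H^2\le(a/s)^2\le\tfrac12$, while $b/s\le\tfrac12<1$ gives $\E(x/s)=\E\big(\tfrac bs\cdot\tfrac xb\big)\le\tfrac bs\,\E(x/b)\le\tfrac bs\le\tfrac12$; adding, $\E_1(x/s)\le1$, so $\Vert x\Vert_\D\le s\le2(a+b)$. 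Letting $a\downarrow\Vert x\Vert_H$ and $b\downarrow\vert x\vert_\D$ (both finite for $x\in\D$) gives $\Vert x\Vert_\D\le2\big(\Vert x\Vert_H+\vert x\vert_\D\big)$, which finishes the proof.

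The only point requiring a little care — the one place the argument is not a routine rescaling — is this last converse bound. Because the term $\Vert\cdot\Vert_H^2$ in $\E_1$ is homogeneous of degree $2$ rather than $1$, one cannot simply write $x/(a+b)$ as the convex combination $\tfrac{a}{a+b}\tfrac xa+\tfrac{b}{a+b}\tfrac xb$ and invoke convexity of $\E_1$ as in the triangle-inequality step of \cref{dirichlet space is banach}, since the resulting estimate would still involve the uncontrolled quantity $\E(x/a)$. Choosing $s$ large enough that the Hilbert-space part and the $\E$-part each drop below $\tfrac12$ separately circumvents this, at the cost of the harmless constant $2$.
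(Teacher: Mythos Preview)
Your proof is correct and genuinely differs from the paper's. The paper establishes the equivalence of $\Vert\cdot\Vert_\D$ and $\Vert\cdot\Vert_{\D,\alpha}$ by first repeating the completeness argument to show that $(\D,\Vert\cdot\Vert_{\D,\alpha})$ is Banach for each $\alpha$, then noting the trivial one-sided comparison $\Vert\cdot\Vert_{\D,\alpha_2}\le\Vert\cdot\Vert_{\D,\alpha_1}$ for $\alpha_1<\alpha_2$ and invoking the open mapping theorem to upgrade this to equivalence; in particular it obtains no explicit constants. For the second part, the paper bounds $\Vert x\Vert_{\D,2}\le\Vert x\Vert_H+\vert x\vert_\D$ by a calculation similar in spirit to yours and then feeds this back into the first part (with $\alpha=2$), so the open mapping theorem is used again implicitly. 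Your argument bypasses all of this: the scaling $\E_1(tx)\le t\,\E_1(x)$ for $t\in[0,1]$ immediately delivers the sharp two-sided bounds $\min(1,\alpha)\Vert x\Vert_{\D,\alpha}\le\Vert x\Vert_\D\le\max(1,\alpha)\Vert x\Vert_{\D,\alpha}$, and your choice $s=\sqrt2\,a+2b$ gives $\Vert x\Vert_\D\le 2(\Vert x\Vert_H+\vert x\vert_\D)$ directly. The payoff is that you avoid completeness and the open mapping theorem entirely and obtain explicit constants; the paper's route, while less elementary, is perhaps more robust in that it requires only the monotonicity of $\alpha\mapsto\Vert\cdot\Vert_{\D,\alpha}$ and would adapt with no change to situations where a direct scaling estimate is less transparent.
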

\begin{proof}
At first, we can use exactly the same proof as for \(\Vert \cdot \Vert_{\D}\) to show that \(\D\) together with \(\Vert \cdot \Vert_{\D,\alpha}\) is a Banach space.
Let \(0<\alpha_1<\alpha_2\). Then
\begin{align*}
    \Vert x \Vert_{\D,\alpha_2} \leq \Vert x \Vert_{\D,\alpha_1},
\end{align*}
for every \(x \in \D \), since \(\Vert x \Vert_{\D,\alpha_2}\) takes the infimum over a larger set. The open mapping theorem implies that comparable complete norms are equivalent. Thus, there is a \(C>0\) such that 
\begin{align*}
 \Vert x \Vert_{\D,\alpha_1}\leq C \Vert x \Vert_{\D,\alpha_2}
\end{align*}
for every \(x \in \D\).

For the second part, let \(x \in \D\). Obviously 
\begin{align*}
    \Vert x \Vert_H \leq \Vert x \Vert_\D
\end{align*}
and
\begin{align*}
    \vert x \vert_\D \leq \Vert x \Vert_\D.
\end{align*}
Hence 
\begin{align*}
    \Vert x \Vert_H + \vert x \vert_\D \leq 2 \Vert x \Vert_\D.
\end{align*}
On the other hand, let \(\lambda_1,\lambda_2\) such that \(\Vert x \Vert_H \leq \lambda_1\) and \(\vert x \vert_\D \leq \lambda_2\). Then
\begin{align*}
    \left\Vert \frac{x}{\lambda_1} \right \Vert_H \leq 1
\end{align*}
and 
\begin{align*}
    \E\left(\frac{x}{\lambda_2}\right) \leq 1.
\end{align*}
Hence, 
\begin{align*}
    \E_1\left(\frac{x}{\lambda_1+\lambda_2}\right) \leq 2.
\end{align*}
Thus,
\begin{align*}
    \Vert x \Vert_{\D,2} \leq \lambda_1+\lambda_2.
\end{align*}
Taking the infimum over all possible \(\lambda_1\) and \(\lambda_2\) yields
\begin{align*}
    \Vert x \Vert_{\D,2} \leq 
    \Vert x \Vert_H+\vert x \vert_\D.
\end{align*}
Since \(\Vert x \Vert_{\D,2}\) and \(\Vert x \Vert_{\D}\) are equivalent, there is a \(C>0\) such that
\begin{align*}
   C \Vert x \Vert_\D \leq  \Vert x \Vert_H+\vert x \vert_\D \leq 2 \Vert x \Vert_\D.
\end{align*}
Hence all the norms are equivalent.
\end{proof}

\begin{thm}
Let \(\E\) be a symmetric, convex and lower semicontinuous functional. Then \(\mathfrak{D}\) is a dual space.
\end{thm}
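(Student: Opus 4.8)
The plan is to realise $\D$ as the dual of a concrete Banach space by means of a classical duality criterion of Ng (refining a result of Dixmier): a Banach space $X$ is isometrically isomorphic to the dual $Z^*$ of a closed subspace $Z \subseteq X^*$ as soon as $Z$ is norming for $X$ and the closed unit ball $B_X$ is $\sigma(X,Z)$-compact. So the whole proof reduces to producing such a $Z$.

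The candidate predual is the following. Identify $H$ with its own dual. By \cref{dirichlet space is banach} the inclusion $\iota : \D \to H$ is bounded, hence its adjoint $\iota^* : H \to \D^*$ is bounded, where $\iota^*(h)$ is the functional $x \mapsto \langle h, x\rangle_H$ restricted to $\D$. Let $Z$ be the norm closure of $\iota^*(H)$ in $\D^*$; this is the predual we are after.

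To see that $Z$ is (isometrically) norming, recall from the proof of \cref{dirichlet space is banach} that the closed unit ball $B_1 := \{x \in H : \|x\|_\D \le 1\}$ of $\D$ equals $\{x \in H : \E_1(x) \le 1\}$, that $\|\cdot\|_\D$ is the Minkowski gauge of $B_1$, and that $B_1$ is convex, symmetric (here the symmetry of $\E$ enters), closed in $H$ (by lower semicontinuity of $\E_1$) and contained in the closed unit ball of $H$. Forming polars in $H$, one checks that $\{\iota^*(h) : h \in B_1^\circ\}$ is exactly the closed unit ball of the subspace $\iota^*(H) \subseteq \D^*$, while the bipolar theorem gives $B_1^{\circ\circ} = B_1$; hence, for every $x \in \D$,
\[
\|x\|_\D \;=\; \sup_{h \in B_1^\circ} |\langle h, x\rangle_H| \;=\; \sup\{\, |\langle z, x\rangle| : z \in \iota^*(H),\ \|z\|_{\D^*} \le 1 \,\},
\]
and the supremum over the larger set $\{z \in Z : \|z\|_{\D^*} \le 1\}$ has the same value, so $Z$ is norming for $\D$.

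Finally I would verify the compactness hypothesis. The set $B_1$ is bounded, convex and norm-closed in the Hilbert (hence reflexive) space $H$, so it is weakly compact in $H$. The topology $\sigma(\D, \iota^*(H))$ is precisely the topology $\D$ inherits from the weak topology of $H$, so $B_1$ is $\sigma(\D, \iota^*(H))$-compact; and since $\iota^*(H)$ is norm-dense in $Z$, the topologies $\sigma(\D, \iota^*(H))$ and $\sigma(\D, Z)$ agree on the norm-bounded set $B_1$. Hence $B_1$ is $\sigma(\D, Z)$-compact, and Ng's criterion yields $\D \cong Z^*$, so $\D$ is a dual space. The only points requiring genuine care are the polar/bipolar bookkeeping that makes the norming isometric rather than merely isomorphic, and the remark that enlarging $\iota^*(H)$ to its closure $Z$ preserves both the norming property and the $\sigma$-compactness of $B_1$; the fact that $\iota^*$ may fail to be injective (equivalently, that $\D$ need not be dense in $H$) is harmless, since the criterion only needs a norming predual, not that $H$ itself be one.
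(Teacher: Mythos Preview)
Your proof is correct and follows essentially the same route as the paper: both invoke a Dixmier--Ng--Kaijser type criterion, taking the functionals induced by $H$ via the continuous embedding $\D \hookrightarrow H$ as the candidate predual and using reflexivity of $H$ to obtain weak compactness of the closed unit ball $B_1$. The paper cites Kaijser's formulation, which only requires that the functionals separate points, so your bipolar bookkeeping to establish that $Z$ is \emph{isometrically} norming is additional detail rather than a different idea.
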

\begin{proof}
Kaijser proved in \cite{Kaijser_Dual_Space}, that a Banach space \(Y\) is a dual space if 
there is a set of continuous linear functionals \(E\) on \(Y\) that separates the points of \(Y\) and the closed unit ball of \(Y\) is compact in the weak topology generated by \(E\).

Let \(E=H'=H\). By the Hahn-Banach theorem, \(E\) separates the points of \(H\). Hence, it separates the points of \(\D\). On the other hand the closed unit ball \(B_1\) of \(\D\) is closed and bounded in \(H\). Therefore, it is compact in the weak* topology by Banach-Alaoglu. But weak and weak* topology coincide on reflexive spaces. Hence, \(B_1\) is compact in the weak topology induced by \(E\). Thus \(\D\) is a dual space.
\end{proof}
Note that Kaijser also showed that the predual is then given by the weak* closure of \(\linspan{E}\) in \(\D'\).

\begin{thm} \label{mod conv chara}
Let \((x_n)_{n\in \mathbb{N}}\) a sequence in \(\D\). Then \(x_n\rightarrow 0\) in \(\D\) if and only if for all \(\lambda >0:  \E_1(\lambda x_n) \rightarrow 0\).
\end{thm}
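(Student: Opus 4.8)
The proof will rest on two facts already available: the equivalence $\E_1(x)\le 1\iff \Vert x\Vert_\D\le 1$ noted just before the statement, and the elementary inequality $\E_1(tx)\le t\,\E_1(x)$ for $t\in[0,1]$, which holds because $\E_1$ is convex with $\E_1(0)=0$ (here $\E(0)=0$ by symmetry). Neither completeness nor lower semicontinuity of the norm enters; the argument is essentially a translation of the Luxemburg-gauge formalism for modular spaces.

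For the implication ``$\E_1(\lambda x_n)\to 0$ for all $\lambda>0$ $\Rightarrow$ $x_n\to 0$ in $\D$'', I would fix $\epsilon>0$, apply the hypothesis with $\lambda=1/\epsilon$, and conclude that for $n$ large $\E_1(x_n/\epsilon)\le 1$, whence $\Vert x_n/\epsilon\Vert_\D\le 1$, i.e.\ $\Vert x_n\Vert_\D\le\epsilon$. Since $\epsilon$ is arbitrary this gives convergence in $\D$.

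For the converse, I would fix $\lambda>0$ and $\epsilon\in(0,1)$. Once $n$ is large enough that $\Vert x_n\Vert_\D\le\epsilon/\lambda$, homogeneity of $\Vert\cdot\Vert_\D$ gives $\Vert \lambda x_n/\epsilon\Vert_\D\le 1$, hence $\E_1(\lambda x_n/\epsilon)\le 1$, and then the convexity inequality with $t=\epsilon$ yields
\begin{align*}
    \E_1(\lambda x_n)=\E_1\!\left(\epsilon\cdot\frac{\lambda x_n}{\epsilon}\right)\le \epsilon\,\E_1\!\left(\frac{\lambda x_n}{\epsilon}\right)\le \epsilon .
\end{align*}
Thus $\limsup_n \E_1(\lambda x_n)\le\epsilon$; letting $\epsilon\downarrow 0$ and using $\E_1\ge 0$ gives $\E_1(\lambda x_n)\to 0$.

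The only subtlety — hardly a genuine obstacle — is that membership of $x_n$ in $\D$ only guarantees $\E_1(\mu x_n)<\infty$ for \emph{some} $\mu>0$, not for every $\lambda$, so a priori $\E_1(\lambda x_n)$ could be $+\infty$. However, the displayed estimate shows $\E_1(\lambda x_n)$ becomes finite for all large $n$, which is all that the convergence statement requires; the finitely many initial terms are irrelevant.
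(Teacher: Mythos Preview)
Your proof is correct and is precisely the standard Luxemburg-norm argument from modular space theory; the paper itself omits the proof and simply refers to \cite[Lemma~2.1.9, 2.1.14 and Corollary~2.1.15]{DHHR_Modular_Spaces}, so your write-up is in fact more explicit than what the paper provides. One small remark: the equivalence $\E_1(x)\le 1\iff \Vert x\Vert_\D\le 1$ you invoke is indeed established earlier (inside the proof of \cref{dirichlet space is banach}) and is restated as \cref{unit ball prop} immediately after the present theorem, so your appeal to it is legitimate.
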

\begin{thm}[Norm-modular unit ball property] \label{unit ball prop}
Let \(x\in \D\). Then \(\Vert x \Vert_\D \leq 1\) if and only if \(\E_1(x)\leq 1\). In this case we have \(\E_1(x) \leq \Vert x \Vert_\D.\)
\end{thm}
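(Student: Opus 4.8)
The plan is to lean on just two elementary features of $\E_1$. First, $\E_1$ is convex with $\E_1(0)=\|0\|_H^2+\E(0)=0$, so for every $z\in H$ and $t\in[0,1]$ we get the star‑shapedness estimate
\begin{align*}
\E_1(tz)=\E_1\bigl(tz+(1-t)0\bigr)\le t\,\E_1(z)+(1-t)\,\E_1(0)=t\,\E_1(z).
\end{align*}
Second, $\E_1$ is lower semicontinuous on $H$; this will be used to show that the infimum defining $\|x\|_\D$ is attained whenever $x\neq 0$.

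For the attainment step I would first dispose of the trivial case $x=0$, where $\|x\|_\D=0$ and $\E_1(x)=0$, so all three assertions hold. For $x\neq 0$ we have $\|x\|_\D\ge\|x\|_H>0$ by the inequality established inside the proof of \cref{dirichlet space is banach}. Consider the set $S=\{\lambda>0 : \E_1(x/\lambda)\le 1\}$, which is nonempty because $x\in\D$. The map $\lambda\mapsto x/\lambda$ is continuous from $(0,\infty)$ to $H$, hence $\lambda\mapsto\E_1(x/\lambda)$ is lower semicontinuous on $(0,\infty)$, so $S$, being one of its sublevel sets, is closed in $(0,\infty)$. Since $\inf S=\|x\|_\D>0$, a minimizing sequence in $S$ converges to a point of $S$, i.e. $\|x\|_\D\in S$ and therefore $\E_1\bigl(x/\|x\|_\D\bigr)\le 1$.

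With this, the theorem follows immediately. If $\E_1(x)\le 1$ then $1\in S$, so $\|x\|_\D\le 1$. Conversely, if $\|x\|_\D\le 1$, then applying the star‑shapedness estimate with $z=x/\|x\|_\D$ and $t=\|x\|_\D\in(0,1]$ yields
\begin{align*}
\E_1(x)=\E_1\Bigl(\|x\|_\D\cdot\frac{x}{\|x\|_\D}\Bigr)\le\|x\|_\D\,\E_1\Bigl(\frac{x}{\|x\|_\D}\Bigr)\le\|x\|_\D\le 1,
\end{align*}
which simultaneously gives the remaining implication and the refined bound $\E_1(x)\le\|x\|_\D$.

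The only non‑routine point is the attainment $\|x\|_\D\in S$; this is precisely where lower semicontinuity of $\E_1$ is needed, and it is also why the case $x=0$ (equivalently $\|x\|_\D=0$) must be separated off, since otherwise $\lambda\mapsto x/\lambda$ need not stay bounded in $H$ as $\lambda$ approaches the infimum. Everything else is a direct consequence of convexity together with $\E_1(0)=0$.
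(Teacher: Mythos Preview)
Your proof is correct. The paper itself does not give an argument but refers to the modular-space literature, noting only that convexity and lower semicontinuity make $\E_1$ a \emph{left-continuous} modular in the sense of \cite{DHHR_Modular_Spaces}. Your argument is essentially a self-contained unpacking of that remark: the attainment step $\E_1\bigl(x/\Vert x\Vert_\D\bigr)\le 1$ is exactly what left continuity of the modular delivers, and you derive it directly from lower semicontinuity of $\E_1$ on $H$ via closedness of the sublevel set $S$ in $(0,\infty)$. The star-shapedness estimate $\E_1(tz)\le t\,\E_1(z)$ is the standard route to the refined bound $\E_1(x)\le\Vert x\Vert_\D$ once attainment is in hand. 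So your proof and the cited one agree in substance; yours has the advantage of being elementary and not requiring the reader to consult an outside reference.
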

\begin{proof}
For both theorems we omit the proof, see \cite[Lemma 2.1.9, 2.1.14 and Corollary 2.1.15]{DHHR_Modular_Spaces} instead. Note that the convexity and lower semicontinuity of \(\E\) and \(\E_1\) imply that \(\E_1\) is left continuous and therefore a modular in the notation of this reference.
\end{proof}

\section{Energy Spaces of non Symmetric Functionals}

\begin{defn}
Let \(\mathcal{E}: H \rightarrow [0, \infty]\) be a lower semicontinuous, convex functional on a real Hilbert space \(H\), such that \(\mathcal{E}(0)=0\). We define the symmetric closure \(\sym \E\) of \(\E\) by 
\begin{align*}
    \sym \mathcal{E} (u)=\sup\{ \mathcal{F}(u) \setdelim \mathcal{F} \text{ lsc, convex and } \mathcal{F}\leq \mathcal{E}_1(\cdot),\mathcal{E}_1(-\cdot)\}.
\end{align*}
\end{defn}

\begin{thm}\label{Thm:Symmetric_Hull_Domain}
Let \(\mathcal{E}: H \rightarrow [0, \infty]\) be a lower semicontinuous, convex functional on a Hilbert \(H\), such that \(\mathcal{E}(0)=0\). Then 
\begin{align*}
    \linspan{\dom(\mathcal{E})}= \linspan{\dom(\sym \mathcal{E})}.
\end{align*}
Moreover, any \(f \in \dom(\sym \mathcal{E})\) can be written as \(f=u-v\) for \(u,v \in \dom (\mathcal{E})\).
\end{thm}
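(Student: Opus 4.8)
The plan is to realize $\sym\E$ from below by one well-chosen admissible competitor whose effective domain is already $\dom\E-\dom\E$, and from above by the trivial bound $\sym\E\le\E_1(\cdot)$. The competitor I would use is the inf-convolution of $\E_1$ with $\E_1(-\cdot)$,
\begin{align*}
    \mathcal{F}_0(f)\coloneqq\inf_{u\in H}\bigl(\E_1(u)+\E_1(u-f)\bigr),\qquad f\in H.
\end{align*}
The soft properties are immediate: $\mathcal{F}_0$ is convex, being an inf-convolution of the convex functions $\E_1$ and $\E_1(-\cdot)$; one has $\mathcal{F}_0\ge 0$ and $\mathcal{F}_0(0)=0$; and, testing with $u=f$ resp.\ $u=0$ and using $\E_1(0)=0$, one gets $\mathcal{F}_0\le\E_1(\cdot)$ and $\mathcal{F}_0\le\E_1(-\cdot)$. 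Its effective domain is exactly $\dom\E-\dom\E$: if $\mathcal{F}_0(f)<\infty$, any $u$ with $\E_1(u)+\E_1(u-f)<\infty$ exhibits $f=u-(u-f)$ with $u,\,u-f\in\dom\E_1=\dom\E$; conversely, for $f=a-b$ with $a,b\in\dom\E$ the choice $u=a$ gives $\mathcal{F}_0(f)\le\E_1(a)+\E_1(b)<\infty$.

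The only genuinely non-formal point, and the one I expect to be the main obstacle, is that $\mathcal{F}_0$ is lower semicontinuous on $H$; for an arbitrary pair of lower semicontinuous convex summands an inf-convolution need not be lower semicontinuous. Here the summand $\Vert\cdot\Vert_H^2$ in $\E_1$ is essential: it forces the sublevel sets of $\E_1$ to be bounded in $H$, and the Hilbert space structure then supplies weak compactness. Concretely, I would take $f_n\to f$ in $H$ and, passing to a subsequence, assume $\mathcal{F}_0(f_n)\to L$, only $L<\infty$ needing treatment. For large $n$ pick $u_n$ with $\E_1(u_n)+\E_1(u_n-f_n)\le\mathcal{F}_0(f_n)+\tfrac1n$; then $\Vert u_n\Vert_H^2\le\E_1(u_n)$ is bounded, so along a further subsequence $u_n\rightharpoonup u$ weakly in $H$ and hence $u_n-f_n\rightharpoonup u-f$. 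Since the convex lower semicontinuous function $\E_1$ is weakly sequentially lower semicontinuous,
\begin{align*}
    \mathcal{F}_0(f)
    &\le \E_1(u)+\E_1(u-f)\\
    &\le \liminf_n\E_1(u_n)+\liminf_n\E_1(u_n-f_n)\\
    &\le \liminf_n\bigl(\E_1(u_n)+\E_1(u_n-f_n)\bigr)=L.
\end{align*}

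With $\mathcal{F}_0$ in hand the theorem follows quickly. By the above, $\mathcal{F}_0$ is an admissible $\mathcal{F}$ in the definition of $\sym\E$, so $\sym\E\ge\mathcal{F}_0$ and therefore $\dom(\sym\E)\subseteq\dom\mathcal{F}_0=\dom\E-\dom\E$; combined with the explicit description of $\dom\mathcal{F}_0$, this is exactly the ``moreover'' claim that every $f\in\dom(\sym\E)$ is $u-v$ with $u,v\in\dom\E$, and in particular $\linspan\dom(\sym\E)\subseteq\linspan(\dom\E-\dom\E)=\linspan\dom\E$, the last equality using $0\in\dom\E$. For the reverse inclusion, every competitor satisfies $\mathcal{F}\le\E_1(\cdot)$, so $\sym\E\le\E_1(\cdot)$, which is finite on $\dom\E_1=\dom\E$; hence $\dom\E\subseteq\dom(\sym\E)$ and $\linspan\dom\E\subseteq\linspan\dom(\sym\E)$. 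The two inclusions give the asserted equality.
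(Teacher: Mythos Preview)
Your proof is correct and takes a genuinely different route from the paper. The paper argues ``from above'': it uses that the epigraph of $\sym\E$ is the closed convex hull of $\epigraph\E_1(\cdot)\cup\epigraph\E_1(-\cdot)$, approximates a point $(f,t)$ by convex combinations $\lambda_n(u_n,s_n)+(1-\lambda_n)(-v_n,r_n)$, and then uses the $\Vert\cdot\Vert_H^2$ term to bound $(u_n)$ and extract weak limits, landing back in $\dom\E-\dom\E$ after a case distinction on whether $\lambda_n\to 0$. You argue ``from below'': you exhibit a single admissible competitor $\mathcal{F}_0=\E_1\,\square\,\E_1(-\cdot)$ whose domain is exactly $\dom\E-\dom\E$, and the weak-compactness step is packaged entirely into the lower semicontinuity of $\mathcal{F}_0$. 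Both proofs rest on the same key point---coercivity of $\E_1$ via the $\Vert\cdot\Vert_H^2$ summand gives weakly convergent subsequences---but your version avoids the case split and the simultaneous bookkeeping of three sequences, and it connects the construction to a standard convex-analytic object. The paper's approach, on the other hand, makes the structure of $\sym\E$ as an lsc convex envelope more explicit and does not presuppose familiarity with inf-convolutions.
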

\begin{proof}
We need to show that \(\dom(\sym \mathcal{E}) \subset  \linspan{\dom(\mathcal{E})}\).
For this purpose, let \(f \in \dom(\sym \mathcal{E})\). Then, there is a \(t\geq 0 \) such that \((f,t) \in \epigraph (\sym \mathcal{E})\). Since \( \sym \mathcal{E} \) is the lower semicontinuous closure of \[\convexhull( \epigraph \mathcal{E}_1(\cdot) \cup \epigraph(\mathcal{E}_1(-\cdot)) ),\] there is a sequence
\((f_n,t_n)_{n \in \mathbb{N}}\) in \(\convexhull( \epigraph(\mathcal{E}_1(\cdot) \cup \epigraph(\mathcal{E}_1(-\cdot)) )\) such that \( (f_n,t_n) \rightarrow (f,t) \) in \(H \times \mathbb{R}\). By definition of the convex hull, there are \( (u_n,s_n),(v_n,r_n) \in \epigraph \E_1\) and \(\lambda_n \in [0,1]\) such that
\begin{align*}
    (f_n,t_n)=\lambda_n(u_n,s_n)+ (1-\lambda_n)(-v_n,r_n).
\end{align*}
After choosing a subsequence, we may assume that \(\lambda_n \rightarrow \lambda \in [0,1]\). Now, let us assume \(\lambda \neq 0 \). Since \(r_n\) is positive, 
\begin{align*}
    0\leq  \lambda_n s_n \leq t_n \rightarrow t.
\end{align*}
Hence, \((s_n)_{n\in \mathbb{n}}\) is bounded since \(\lambda \neq 0\).
Again, after choosing a subsequence if necessary, there is a \(s \in \mathbb{R}\) such that \(s_n \rightarrow s\). By the definition of the epigraph, we know
\begin{align*}
    \mathcal{E}(u_n)+ \Vert u_n \Vert_H \leq s_n \rightarrow s. 
\end{align*}
Hence, \((u_n)_{n \in \mathbb{N}} \) is bounded in \(H\). Thus there is a subsequence, again denoted by \((u_n)_{n \in \mathbb{N}}\), such that \(u_n \rightharpoonup u\). Since \(\mathcal{E}\) is lower semicontinuous with respect to the weak topology on \(H\), \(u \in \dom \mathcal{E}\). On the other hand,
\begin{align*}
    f_n= \lambda_n u_n - (1- \lambda_n) v_n,
\end{align*}
and \(f_n \rightharpoonup f,u_n \rightharpoonup u\) implies \((1-\lambda_n)v_n \rightharpoonup v\). Hence,
\begin{align*}
    \mathcal{E}(v) &\leq \liminf_{n \rightarrow \infty} \mathcal{E}\big((1-\lambda_n)v_n \big) \\
    &\leq \liminf_{n \rightarrow \infty} (1-\lambda_n) \mathcal{E}(v_n) \\
    &\leq \liminf_{n \rightarrow \infty} (1-\lambda_n) r_n \\
    &=  \liminf_{n \rightarrow \infty} t_n- \lambda_n s_n \\
    &= t- \lambda s \\& < \infty
\end{align*}
Thus, \(v \in \dom(\mathcal{E})\) and \(f=u-v\). Hence, \(f \in \linspan \dom( \mathcal{E})\).

If \(\lambda=0\) then \(1-\lambda \neq 0\) and switching \(r\) and \(s\) as well as \(u\) and \(v\) in the previous argument yields the conclusion.
\end{proof}

\begin{defn}
Let \(\mathcal{E}: H \rightarrow [0, \infty]\) be a lower semicontinuous, convex functional on a Hilbert \(H\), such that \(\mathcal{E}(0)=0\). We call the space \(\D\) associated with \(\sym\E\) the \underline{energy space} of \(\E\).
\end{defn}

\begin{remark}
If \(\E \) is already symmetric, then \(\sym \E =\E_1\) and both definitions of \(\D\) coincide.
\end{remark}

\begin{remark}
In this article we always start with a Hilbert space H. For the results up to this point this is in fact not necessary. Similar results hold in the case of reflexive Banach spaces like for example \(L^p\) for \(1<p< \infty\). It might be more natural in some applications, for example if \(\E\) itself is \(p\)-homogeneous, to define
\begin{align*}
    \E_1(\cdot)=\Vert \cdot \Vert_{L^p}^p + \E(\cdot),
\end{align*}
which leads again to a \(p\)-homogeneous functional.
\end{remark}

\begin{remark}
If \(0\) is not a global minimizer of \(\E\), but if nevertheless \(\E\) possesses a global minimizer, then we can shift the functional \(\E\) and still define an energy space. This shifted functional is still a Dirichlet form if \(\E\) is a Dirichlet form. But one looses the property \(\dom \E \subset \mathfrak{D}\) if one defines \(\D\) by using the shifted functional.
\end{remark}

\begin{remark}
We also assume that \(\E\) is convex. But in fact we only need that \(\E_1\) is convex, or more general that
\begin{align*}
    \E_\alpha (x) = \Vert x \Vert_H + \alpha \E(x) 
\end{align*}
is convex for some \(\alpha>0\). Then \(\E_\alpha\) generates a energy space and the norms are equivalent for any \(\alpha\) such that \(\E_\alpha\) is convex. To prove this, let \(\alpha < \beta \) such that \(\E_\alpha\) and \(\E_\beta\) are convex. Then \(\E_\alpha\) resp. \(\E_\beta\) generate an energy space \(\D_\alpha\) resp. \(\D_\beta\). By \cref{thm:equivalent_norms}, the norms \(\Vert \cdot \Vert_{\D_\alpha}\) and \(\Vert \cdot \Vert_{\D_\beta}\) are equivalent to \(\Vert \cdot \Vert_H +\vert \cdot \vert_{\D_\alpha}\) and \(\Vert \cdot \Vert_H+\vert \cdot \vert_{\D_\beta}\). Obviously
\begin{align*}
    \vert x \vert_{\D_\alpha} \leq \vert x \vert_{\D_\beta}
\end{align*}
by definition. On the other hand,
\begin{align*}
    \vert x \vert_{\D_\beta} &= \inf\Big\{ \lambda>0 : \beta \E\Big(\frac{x}{\lambda} \Big)\leq 1 \Big\} \\
    &= \inf\Big\{ \lambda>0 : \alpha \frac{\beta }{\alpha}\E\Big(\frac{x}{\lambda} \Big)\leq 1 \Big\} \\
    & \leq \inf\Big\{ \lambda>0 : \alpha\E\Big(\frac{\beta x}{\alpha \lambda} \Big)\leq 1 \Big\} \\
    &= \inf\Big\{ \frac{\beta }{ \alpha }\lambda>0 :  \alpha \E\Big(\frac{x}{\lambda} \Big)\leq 1 \Big\} \\
    &= \frac{\beta  }{ \alpha } \vert x \vert_ {\D_\alpha}.
\end{align*}
Hence, the norms on \(\D_\alpha\) and \(\D_\beta\) are equivalent. Thus the energy space does not depend on the choice of \(\alpha\) and we can define an energy space for any functional \(\E\) such that
\begin{align*}
    \E_\alpha (x) = \Vert x \Vert_H + \alpha \E(x) 
\end{align*}
is convex. Such functionals \(\E\) are called \(\omega\)-semiconvex, where \(\omega=\frac{1}{\alpha}\).

Taking the previous remarks into account we can define an energy space for any lower semicontinuous, semiconvex functional \(\E\) on any reflexive Banach space.
\end{remark}

\section{Dirichlet Forms and Dirichlet Spaces}

Let \(X\) be a countably generated Borel space and \(m\) a \(\sigma\)-finite Borel measure such that \(\supp(m)=X\). The following definition was first introduced by \cite{CG_Nonlinear_Dirichlet_Forms}.

\begin{defn} \label{def:dirichlet_form}
Let \(\mathcal{E} : \lspace \rightarrow [0,\infty]\) be a convex and lower semicontinuous functional with dense effective domain. We call \(\mathcal{E}\) a \underline{Dirichlet form} if
\begin{align}\label[property]{diricheqn1}
&\mathcal{E}(u \wedge v) + \mathcal{E}(u \vee v) \leq \mathcal{E}(u)+\mathcal{E}(v)
\end{align}
and
\begin{align} \label[property]{diricheqn2}
&\mathcal{E}\bigg(v+\frac{1}{2}\big( (u-v+\alpha)_+-(u-v-\alpha)_- \big)\bigg) \nonumber \\&\quad \quad +\mathcal{E}\bigg(u-\frac{1}{2}\big( (u-v+\alpha)_+-(u-v-\alpha)_- \big)\bigg) \leq \mathcal{E}(u)+\mathcal{E}(v)
\end{align}
for every \(u,v \in \lspace,\alpha>0\).

If \(\E(0)=0\) we call the energy space \(\D\) of \(\E\) the \underline{Dirichlet space}.
\end{defn}

\begin{remark}
It is well known, that a convex, lower semicontinuous functional with dense domain generates a semigroup of nonlinear contractions. In the theory of bilinear forms, Dirichlet forms are exactly those forms, which generate sub-Markovian semigroups, that is order-preserving and \(L^\infty\)-contractive semigroups. The two conditions in the previous definition hold if and only if the semigroup generated by \(\E\) is sub-Markovian. 
\end{remark}

\begin{remark}
If \(a \) is a bilinear Dirichlet form, then 
\begin{align*}
    \E(u)=\begin{cases} \frac{1}{2} a(u,u) &\text{ if } x \in D(a) \\
    \infty &\text{ otherwise}
    \end{cases}
\end{align*}
is a Dirichlet form in the sense of the previous definition and the Dirichlet space \(\D\) and the classical Dirichlet space, that is the form domain \(D(a)\) together with the scalar product \(a_1\), are isomorphic as topological vector spaces. Furthermore, there is a scalar product on \(\D\) which induces the norm \(\Vert \cdot \Vert_\D\).
\end{remark}

\begin{defn}
We call \(p \in W^{1, \infty}(\mathbb{R})\) a \underline{normal contraction} if \(0 \leq p' \leq 1\) and \(p(0)=0\).
\end{defn}

\begin{remark}
Note that any such normal contraction \(p\) induces an operator \(T_p: \lspace \rightarrow \lspace\) defined by
\begin{align*}
    (T_p u )(x)=p(u(x)).
\end{align*}
We do not distinguish between \(p\) and \(T_p\) and denote both objects by \(p\).
\end{remark}

\begin{thm}[Beurling-Deny Criterion]\label{thm:beuerling:deny_criterion}

Let \(\E: \lspace \rightarrow [0,\infty] \) be a convex, lower semicontinuous functional. Then \(\E\) is a Dirichlet form if and only if 
\begin{align*}
    \E(u- p(u-v)) + \E(v+ p(u-v)) \leq \E(u)+\E(v)
\end{align*}
for every \(u,v  \in \lspace\) and every normal contraction \(p\).
\end{thm}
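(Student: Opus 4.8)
The plan is to show the two stated conditions \eqref{diricheqn1}--\eqref{diricheqn2} are equivalent to the single Beurling--Deny inequality by viewing both as special instances of the same splitting phenomenon. The easy direction is to deduce \eqref{diricheqn1} and \eqref{diricheqn2} from the Beurling--Deny criterion: for \eqref{diricheqn1}, one takes the normal contraction $p(t) = t_+$ (which lies in $W^{1,\infty}(\mathbb{R})$ with $0 \le p' \le 1$ and $p(0)=0$), so that $u - p(u-v) = u - (u-v)_+ = u \wedge v$ and $v + p(u-v) = v + (u-v)_+ = u \vee v$; for \eqref{diricheqn2}, one takes $p(t) = \tfrac12\big((t+\alpha)_+ - (t-\alpha)_-\big)$, which is easily checked to be a normal contraction (it is piecewise linear with slopes $0$ and $1$, vanishes at $0$), and then $u - p(u-v)$ and $v + p(u-v)$ are exactly the two arguments appearing in \eqref{diricheqn2}. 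So this direction is just a verification.

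The substantial direction is the converse: from \eqref{diricheqn1} and \eqref{diricheqn2}, derive the inequality for an arbitrary normal contraction $p$. The strategy is a three-step approximation. First I would handle piecewise-linear normal contractions with finitely many breakpoints whose slopes are all $0$ or $1$: such a $p$ can be written as a finite composition (or, better, as a suitable combination) of the two elementary building blocks $t \mapsto t_+$ and $t \mapsto \tfrac12\big((t+\alpha)_+ - (t-\alpha)_-\big)$ used above — iterating the argument that derives \eqref{diricheqn1} and \eqref{diricheqn2} lets one insert successive "cuts." The key bookkeeping point is that after applying the inequality once with one building block, the pair $(u', v') := (u - p_0(u-v),\, v + p_0(u-v))$ again satisfies $u' \vee v' \le u \vee v$, $u' \wedge v' \ge u \wedge v$ and $u' - v' = (u-v) - 2p_0(u-v)$ with $u'-v'$ a normal contraction applied to $u-v$, so one can apply another building block to $(u',v')$ and telescope the resulting inequalities, each right-hand side being controlled by the previous one, ending at $\E(u)+\E(v)$.

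Second, I would pass from these special piecewise-linear contractions to general piecewise-linear normal contractions, and then to arbitrary normal contractions $p$, by approximation: any normal contraction is a locally uniform limit of piecewise-linear ones $p_n$ with slopes in $[0,1]$ (approximate $p'$ in an appropriate sense and integrate), hence $p_n(u-v) \to p(u-v)$ in $\lspace$ (using that $|p_n(w) - p(w)| \le \|p_n - p\|_\infty$ pointwise, plus a truncation/dominated-convergence argument since $u-v \in \lspace$ and the $p_n$ are uniformly Lipschitz with $p_n(0)=0$). Then $u - p_n(u-v) \to u - p(u-v)$ and $v + p_n(u-v) \to v + p(u-v)$ in $\lspace$, and lower semicontinuity of $\E$ gives
\begin{align*}
\E(u - p(u-v)) + \E(v + p(u-v)) \le \liminf_{n\to\infty}\big(\E(u - p_n(u-v)) + \E(v + p_n(u-v))\big) \le \E(u) + \E(v).
\end{align*}
The main obstacle I anticipate is the combinatorial core of Step one: showing cleanly that every $\{0,1\}$-slope piecewise-linear normal contraction is generated by the two elementary moves and that the telescoping of the inequalities genuinely closes up (in particular that at each stage the new pair still differs by a normal contraction of $u-v$, so that the next elementary move is legitimate). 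Once that lemma is in place, the density and lower-semicontinuity steps are routine.
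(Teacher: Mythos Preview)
The paper does not actually prove this theorem; it simply cites \cite[Proposition 1.2, Lemma 7.1, Proposition 7.2]{BC_completly_accretive_operators} and \cite[Corollary 2.1]{BP_maximum_principle_nonlinear}. So your proposal should be judged on its own merits rather than by comparison.

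Your easy direction is correct in substance, but one detail is off: the function \(q_\alpha(t)=\tfrac12\bigl((t+\alpha)_+-(t-\alpha)_-\bigr)\) has slopes \(\tfrac12,1,\tfrac12\), not \(0\) and \(1\). It is still a normal contraction, so \eqref{diricheqn2} does follow from the Beurling--Deny inequality; only your description of \(q_\alpha\) needs correcting.

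For the hard direction there are two genuine problems. First, your bookkeeping claim that after one elementary step the new difference \(u'-v'=(\mathrm{id}-2p_0)(u-v)\) is ``a normal contraction applied to \(u-v\)'' is false for both of your building blocks: for \(p_0=(\cdot)_+\) one has \(1-2p_0'=-1\) on \((0,\infty)\), and for \(p_0=q_\alpha\) one has \(1-2q_\alpha'=-1\) on \((-\alpha,\alpha)\). Fortunately this claim is also unnecessary: \eqref{diricheqn1} and \eqref{diricheqn2} hold for \emph{all} pairs \((u',v')\in\lspace\times\lspace\), so you may iterate regardless. What iteration actually gives you is closure of the ``good'' set of \(p\) under the (non-commutative) operation \(p_0\star p_1\coloneqq p_0+p_1\circ(\mathrm{id}-2p_0)\).

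Second, and more seriously, the step ``pass from these special piecewise-linear contractions to general piecewise-linear normal contractions'' has no mechanism behind it in your outline. Iteration alone, starting from \(\{0,\mathrm{id},(\cdot)_+,q_\alpha\}\), does not obviously reach an arbitrary normal contraction (try producing \((t-a)_+\) for a generic \(a>0\)). The missing ingredient is the convexity of \(\E\): if the Beurling--Deny inequality holds for \(p_1\) and \(p_2\), then by
\[
\E\Bigl(u-\tfrac12(p_1+p_2)(u-v)\Bigr)\le \tfrac12\E\bigl(u-p_1(u-v)\bigr)+\tfrac12\E\bigl(u-p_2(u-v)\bigr)
\]
(and similarly for the \(v\)-term) it also holds for \(\tfrac12(p_1+p_2)\). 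Thus the set of admissible \(p\) is convex, closed under \(\star\), closed under \(\lspace\)-pointwise limits via lower semicontinuity, and contains \(0,\mathrm{id},(\cdot)_+\) and all \(q_\alpha\). With convexity available, one can build dyadic-slope piecewise-linear contractions and then pass to the limit; without it your Step one cannot close. You correctly flagged the combinatorics as the main obstacle, but you should make the convexity of the admissible class the centrepiece of that step rather than the (incorrect) normal-contraction bookkeeping.
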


The proof follows from \cite[Proposition 1.2, Lemma 7.1, Proposition 7.2]{BC_completly_accretive_operators} and  \cite[Corollary 2.1]{BP_maximum_principle_nonlinear}.

\begin{thm} \label{Thm:D_Riesz_Subspace}
Let \(\mathcal{E}\) be a Dirichlet form on \(\lspace\) such that \(\E(0)=0\). Then the space \(\mathfrak{D}\) is a Riesz subspace of \(\lspace\). That is, for \(u,v \in \D\) we have
\begin{align*}
u \wedge v, u \vee v \in \mathfrak{D}.
\end{align*}
\end{thm}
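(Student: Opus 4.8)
The plan is to reduce the whole statement to one closure property — that $\mathfrak{D}$ is stable under the positive-part map $w \mapsto w_+ = w \vee 0$ — and then to extract that property from the defining sublattice inequality \cref{diricheqn1}. Since $\mathfrak{D}$ is a linear subspace of $\lspace$ (it is the Banach space of \cref{dirichlet space is banach} attached to the symmetric functional $\sym\E$), and since the pointwise identities
\begin{align*}
u \vee v = v + (u - v)_+ , \qquad u \wedge v = u + v - u \vee v
\end{align*}
hold in $\lspace$, it suffices to prove: if $w \in \mathfrak{D}$ then $w_+ \in \mathfrak{D}$.

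First I would make membership in $\mathfrak{D}$ concrete. By definition $\mathfrak{D}$ is the energy space of $\sym\E$, so $w \in \mathfrak{D}$ means precisely that $\lambda w \in \dom(\sym\E)$ for some $\lambda > 0$; by the ``Moreover'' clause of \cref{Thm:Symmetric_Hull_Domain} we may then write $\lambda w = f - g$ with $f, g \in \dom\E$. I would also record the elementary inclusion $\dom\E \subseteq \mathfrak{D}$: since $\sym\E \le \E_1 = \Vert \cdot \Vert_H^2 + \E$, any $h \in \dom\E$ satisfies $\sym\E(h) < \infty$, hence $h \in \mathfrak{D}$.

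The heart of the argument is to apply \cref{diricheqn1} to the pair $f, g \in \dom\E$. As $\E$ is $[0,\infty]$-valued, the inequality $\E(f \wedge g) + \E(f \vee g) \le \E(f) + \E(g) < \infty$ forces both $f \wedge g$ and $f \vee g$ into $\dom\E$. Together with the pointwise identity $(f - g)_+ = (f \vee g) - g$ this gives
\begin{align*}
\lambda w_+ = (\lambda w)_+ = (f - g)_+ = (f \vee g) - g \in \dom\E - \dom\E \subseteq \mathfrak{D},
\end{align*}
where the last inclusion is linearity of $\mathfrak{D}$. Dividing by $\lambda$ yields $w_+ \in \mathfrak{D}$, and the two reductions above then give $u \vee v, u \wedge v \in \mathfrak{D}$, as desired.

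The only real subtlety I anticipate is the bookkeeping in passing between $\E$ and $\sym\E$: the Dirichlet inequality \cref{diricheqn1} is about $\E$, whereas $\mathfrak{D}$ is built from $\sym\E$, so one must descend from an abstract element of the energy space to an honest difference of elements of $\dom\E$ — this is exactly where \cref{Thm:Symmetric_Hull_Domain} enters — and then climb back up, using that $\mathfrak{D}$ is linear and that $x \mapsto x_+$ is positively homogeneous, so that scaling by $\lambda$ does not interfere. Everything else (nonnegativity of $\E$, the identity $(f-g)_+ = f \vee g - g$, and the reduction of $\vee, \wedge$ to $(\cdot)_+$) is routine. One could equally well run the middle step through the Beurling--Deny criterion \cref{thm:beuerling:deny_criterion} with the normal contraction $p(t) = t_+$, but \cref{diricheqn1} is already precisely that instance.
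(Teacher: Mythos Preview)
Your proof is correct and follows essentially the same route as the paper: both arguments reduce to closure under $w\mapsto w\vee 0$, invoke \cref{Thm:Symmetric_Hull_Domain} to write a scaled element of $\mathfrak{D}$ as a difference $f-g$ with $f,g\in\dom\E$, apply \cref{diricheqn1} to obtain $f\vee g\in\dom\E$, and use the identity $(f-g)_+=(f\vee g)-g$ together with linearity of $\mathfrak{D}$ to conclude. The only cosmetic difference is that the paper phrases the reduction via $g\vee 0$ and $g\wedge 0$ separately, whereas you package everything through the single map $(\cdot)_+$; the content is identical.
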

\begin{proof}
Let \(g \in \mathfrak{D}\). Since 
\[\mathfrak{D}= \big\{ f \in \lspace \setdelim \text{ there is } \lambda>0 \text{ such that }\lambda f \in \dom(\sym\E) \big\},\]
there is a \(\lambda \geq 0\) and a function \(f \in \dom(\sym \E)\) such that \(g=\lambda f\). We showed in \cref{Thm:Symmetric_Hull_Domain}, that there are \(u,v \in \dom(\mathcal{E})\) such that \(f=u-v\). Hence,
\begin{align*}
    g \vee 0&= (\lambda u - \lambda v )\vee 0 \\
    &=\lambda (u-v) \vee 0 \\
    &=\lambda( u \vee v) -\lambda v.
\end{align*}
Since \(u \vee v, v \in \dom(\mathcal{E})\), \(g \vee 0 \in \linspan \dom(\mathcal{E}) = \mathfrak{D}\). The same argument works for \(g \wedge 0\). This implies the claim, since
\begin{align*}
    x \wedge y = (x-y) \wedge 0 +y.
\end{align*}
\end{proof}

\begin{thm} \label{Thm:D_Riesz_Subspace_Norm}
Let \(\mathcal{E}\) be a symmetric Dirichlet form on \(\lspace\). Then
\begin{align*}
\Vert u \wedge v\Vert_\D \leq \Vert u\Vert_\D+\Vert v\Vert_\D
\end{align*}
for every \(u,v \in \D\).
\end{thm}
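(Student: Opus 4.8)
The plan is to follow the proof of the triangle inequality for $\Vert\cdot\Vert_\D$, replacing the convexity step by the lattice inequality \cref{diricheqn1}. The main tool will be the norm--modular unit ball property (\cref{unit ball prop}): to conclude $\Vert u\wedge v\Vert_\D\leq c$ it is enough to verify $\E_1\bigl((u\wedge v)/c\bigr)\leq 1$.

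First I would record two elementary observations. (i) The functional $\E_1$ itself satisfies \cref{diricheqn1}: pointwise on $X$ the pair $(u\wedge v,\,u\vee v)$ is a permutation of $(u,v)$, so $\Vert u\wedge v\Vert_H^2+\Vert u\vee v\Vert_H^2=\Vert u\Vert_H^2+\Vert v\Vert_H^2$, and adding \cref{diricheqn1} for $\E$ gives $\E_1(u\wedge v)+\E_1(u\vee v)\leq\E_1(u)+\E_1(v)$ for all $u,v\in\lspace$. (ii) Since $\E_1$ is convex with $\E_1(0)=0$, we have $\E_1(\lambda w)\leq\lambda\,\E_1(w)$ for every $w\in\lspace$ and $\lambda\in[0,1]$. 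Also, by \cref{Thm:D_Riesz_Subspace} we already know $u\wedge v\in\D$, so its $\D$-norm is finite.

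Then, fixing $a>\Vert u\Vert_\D$ and $b>\Vert v\Vert_\D$ (note $a,b>0$), homogeneity and \cref{unit ball prop} give $\E_1(u/a)\leq 1$ and $\E_1(v/b)\leq 1$. The key identity is
\[
    \frac{u\wedge v}{a+b}=\Bigl(\tfrac{a}{a+b}\cdot\tfrac{u}{a}\Bigr)\wedge\Bigl(\tfrac{b}{a+b}\cdot\tfrac{v}{b}\Bigr),
\]
which exhibits the rescaled lattice operation as a lattice operation applied to the rescaled functions $\tfrac{a}{a+b}\cdot\tfrac{u}{a}$ and $\tfrac{b}{a+b}\cdot\tfrac{v}{b}$. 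Applying (i) to these two functions, discarding the nonnegative $\E_1$-value of their join, and then using (ii) with $\lambda=\tfrac{a}{a+b}$ and $\lambda=\tfrac{b}{a+b}$ respectively, one obtains
\[
    \E_1\Bigl(\tfrac{u\wedge v}{a+b}\Bigr)\leq\E_1\Bigl(\tfrac{a}{a+b}\cdot\tfrac{u}{a}\Bigr)+\E_1\Bigl(\tfrac{b}{a+b}\cdot\tfrac{v}{b}\Bigr)\leq\tfrac{a}{a+b}\E_1\bigl(\tfrac{u}{a}\bigr)+\tfrac{b}{a+b}\E_1\bigl(\tfrac{v}{b}\bigr)\leq 1.
\]
By \cref{unit ball prop} this yields $\Vert u\wedge v\Vert_\D\leq a+b$, and taking the infimum over all such $a,b$ gives $\Vert u\wedge v\Vert_\D\leq\Vert u\Vert_\D+\Vert v\Vert_\D$.

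I do not expect a serious obstacle. The one point that deserves care is the interaction between the modular rescaling and the Dirichlet lattice inequality: one must apply \cref{diricheqn1} to the rescaled functions $\tfrac{a}{a+b}\cdot\tfrac{u}{a}$ and $\tfrac{b}{a+b}\cdot\tfrac{v}{b}$ (not to $u,v$ directly), and one must notice that $\E_1$, and not merely $\E$, inherits \cref{diricheqn1}, which works because the squared $L^2$-norm splits with equality under $\wedge$ and $\vee$.
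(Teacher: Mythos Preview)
Your proof is correct and rests on the same two ingredients as the paper's: the norm--modular unit ball property (\cref{unit ball prop}) and the observation that $\E_1$ itself satisfies \cref{diricheqn1}. The only difference is cosmetic: the paper argues by contradiction with a single scaling parameter $\lambda\in\bigl(1/\Vert u\wedge v\Vert_\D,\,1/(\Vert u\Vert_\D+\Vert v\Vert_\D)\bigr)$, whereas you give a direct argument with two parameters $a,b$ patterned on the triangle-inequality proof.
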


\begin{proof}
Let us assume the contrary. Then, there are \(u,v \in \mathfrak{D} \) such that 
\begin{align*}
 \Vert u \wedge v \Vert_\D > \Vert u\Vert_\D+\Vert v\Vert_\D.
\end{align*}
Note that, either \(u\) or \(v\) is non zero.
Therefore, we can choose a  \( \lambda \in \left(\frac{1}{\Vert u \wedge v \Vert_\D},\frac{1}{\Vert u\Vert_\D+\Vert v\Vert_\D} \right) \). This implies \(\lambda\Vert u \wedge v \Vert_\D >1\) and \(\Vert \lambda u \Vert_\D,\Vert \lambda v\Vert_\D <1\). Hence, by \cref{unit ball prop},
\begin{align*}
\mathcal{E}_1\big(\lambda(u \wedge v)\big) \geq \lambda \Vert u \wedge v \Vert_\D > \Vert \lambda u\Vert_\D+\Vert \lambda v\Vert_\D \geq \mathcal{E}_1\big(\lambda u \big)+\mathcal{E}_1\big(\lambda v\big).
\end{align*}
Since \(\mathcal{E}_1\) is a Dirichlet form, the previous inequality is a contradiction.

\end{proof}

One could hope that the following even stronger inequality holds
\begin{align*}
    \Vert u \wedge v \Vert_\D +\Vert u \vee v \Vert_\D \leq \Vert u \Vert_\D + \Vert v \Vert_\D .
\end{align*}
But this is false, as the next example illustrates.

\begin{exmp}
Let \(X= [0,1]\) and \(m\) the Lebesgue measure. We consider the functional \(\mathcal{E}: \lspace \rightarrow [0,\infty]\) given by
\begin{align*}
    \mathcal{E}(u)=\int_X \chi_{[0,1]}(\vert u(x) \vert ) dx,
\end{align*}
where \(\chi_A: \mathbb{R} \rightarrow \{0,\infty\}\) is given by
\begin{align*}
    \chi_A(x)=\begin{cases} 0 & \text{ if } x \in A, \\
    \infty & \text{ else. }
    \end{cases}
\end{align*}
The functional \(\mathcal{E}\) is lower semicontinuous, by Fatou's Lemma. Additionally, it is convex, since \(\chi_{[0,1]}\) is convex and it is easy to verify that \(\mathcal{E}\) satisfies the projection inequalities of \cref{def:dirichlet_form}. Hence, it is a symmetric Dirichlet form. The norm \(\Vert \cdot \Vert_\D\) is induced by the functional
\begin{align*}
    \mathcal{E}_1(u)=\Vert u \Vert_2^2 + \mathcal{E}(u).
\end{align*}
By definition, the Dirichlet space \(\mathfrak{D}\) is given by all elements \(u\in \lspace\) such that there is a \(\lambda>0\) satisfying 
\begin{align*}
    \mathcal{E}(\lambda^{-1} u)=\int_X \chi_{[0,1]}(\vert \lambda^{-1} u(x) \vert ) dx \leq 1,
\end{align*}
which is equivalent to 
\begin{align*}
    \chi_{[0,1]}(\vert \lambda^{-1} u(x) \vert )=0
\end{align*}
almost everywhere, or in other words
\begin{align*}
  \vert  u(x) \vert \leq \lambda \text{ for almost all } x \in X
\end{align*}
Thus, \(u \in \mathfrak{D}\) if and only if \(u \in L^\infty(X,m)\). Analogously the norm is given by 
\begin{align*}
    \Vert u \Vert_\D= \max\{\Vert u \Vert_2,\Vert u \Vert_\infty\}.
\end{align*}
Since \(([0,1],m)\) is a probability space, the norm \(\Vert \cdot \Vert_\D\) is equal to \(\Vert \cdot \Vert_\infty\).
Now let 
\begin{align*}
        f(x)=\begin{cases} -1 & \text{ if } x< \frac{1}{2}, \\
    1 & \text{ if } x\geq \frac{1}{2}.
    \end{cases}
\end{align*}
Then, \(f \in \mathfrak{D}\). But
\begin{align*}
     \Vert f \Vert_\D=\Vert f \wedge 0 \Vert_\D=1=\Vert f \vee 0 \Vert_\D.
\end{align*}
Thus, 
\begin{align*}
    \Vert f \wedge 0 \Vert_\D + \Vert f \vee 0 \Vert_\D =2 > 1 = \Vert f \Vert_\D + \Vert 0 \Vert_\D.
\end{align*}
Therefore, the inequality 
\begin{align*}
    \Vert u \wedge v \Vert_\D + \Vert u \vee v \Vert_\D \leq \Vert u \Vert +\Vert v \Vert,
\end{align*}
does not hold for the energy norm of a Dirichlet form in general.
\end{exmp}

\begin{thm}\label{inf sup sontinuity}
Let \(\mathcal{E}\) be a quasilinear, symmetric Dirichlet form and \(v \in \mathfrak{D}\).
Then the lattice operations are continuous.
\end{thm}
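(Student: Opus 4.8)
The goal is to show that for a quasilinear symmetric Dirichlet form $\E$ and fixed $v\in\D$, the maps $u\mapsto u\vee v$ and $u\mapsto u\wedge v$ are continuous from $\D$ to $\D$. First I would reduce everything to a single model map. Since $u\wedge v = -((-u)\vee(-v))$ and $\Vert\cdot\Vert_\D$ is symmetric, continuity of $\vee$ against $v$ is equivalent to continuity of $\wedge$ against $-v$, so it suffices to treat, say, $u\mapsto u\vee v$. Moreover, using the identity $x\vee y = (x-y)\vee 0 + y$, and the fact that translation by the fixed element $v$ is an isometry of $\D$ (here quasilinearity is needed — it guarantees $u-v\in\D$ and $u\vee v\in\D$ in the first place, i.e. $\D$ is a genuine linear space on which these operations land inside $\D$), it is enough to prove that $w\mapsto w_+ := w\vee 0$ is continuous on $\D$. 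By homogeneity and the triangle inequality it then suffices to prove continuity at $w=0$: if $w_n\to 0$ in $\D$ then $(w_n)_+\to 0$ in $\D$.

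Now I would invoke the modular characterization of convergence, Theorem \ref{mod conv chara}: $w_n\to 0$ in $\D$ iff $\E_1(\lambda w_n)\to 0$ for every $\lambda>0$, and likewise $(w_n)_+\to 0$ in $\D$ iff $\E_1(\lambda (w_n)_+)\to 0$ for every $\lambda>0$. So fix $\lambda>0$; I must show $\E_1(\lambda(w_n)_+)\to 0$ given $\E_1(\mu w_n)\to 0$ for all $\mu$. The term $\Vert\lambda(w_n)_+\Vert_H^2\le\lambda^2\Vert w_n\Vert_H^2\to 0$ is immediate, so the real content is $\E(\lambda(w_n)_+)\to 0$. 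Here I would use the Beurling–Deny criterion (Theorem \ref{thm:beuerling:deny_criterion}), or rather its building block $\E(u-p(u-v))+\E(v+p(u-v))\le\E(u)+\E(v)$: taking $v=0$ and $u=\lambda w_n$ and choosing $p$ to be the normal contraction $p(t)=t_+$ (or a smooth approximation with $0\le p'\le1$, $p(0)=0$), one gets $p(\lambda w_n)=(\lambda w_n)_+=\lambda(w_n)_+$, hence
\begin{align*}
\E\big(\lambda(w_n)_+\big) + \E\big(\lambda w_n - \lambda(w_n)_+\big) \le \E(\lambda w_n) + \E(0) = \E(\lambda w_n).
\end{align*}
Since $\E\ge 0$, this yields $\E(\lambda(w_n)_+)\le\E(\lambda w_n)\le\E_1(\lambda w_n)\to 0$. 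Combining with the Hilbert-norm bound gives $\E_1(\lambda(w_n)_+)\to 0$ for every $\lambda>0$, which is exactly $(w_n)_+\to 0$ in $\D$ by Theorem \ref{mod conv chara}. Unwinding the reductions, $u\mapsto u\vee v$ and $u\mapsto u\wedge v$ are continuous.

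The main obstacle I anticipate is purely bookkeeping rather than conceptual: making sure each reduction step actually stays inside the space $\D$ and is norm-continuous there, which is precisely where the quasilinearity hypothesis earns its keep (without it $u-v$ or $u\vee 0$ need not lie in $\D$, and translation by $v$ is not even well defined as a self-map). A second minor point is that $t\mapsto t_+$ is only Lipschitz, not $C^1$, so strictly it is not in $W^{1,\infty}(\mathbb{R})$ in a way that makes $p'$ classically defined at $0$; one either allows weak derivatives in the definition of normal contraction (as the paper seems to, since $W^{1,\infty}$ already means a.e.\ derivatives) or approximates $t_+$ uniformly by smooth normal contractions $p_\varepsilon$ and passes to the limit using lower semicontinuity of $\E$ together with $\E(\lambda p_\varepsilon(w_n))\le\E(\lambda w_n)$. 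Everything else — homogeneity, the triangle inequality, the translation identity — is routine.
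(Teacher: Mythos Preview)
Your Beurling--Deny argument for continuity of $w \mapsto w_+$ at $w=0$ is correct and clean, and the bound $\|w_+\|_\D \le \|w\|_\D$ it yields is in fact one ingredient the paper also uses. The gap is the reduction step: you claim that ``by homogeneity and the triangle inequality'' continuity at $0$ gives continuity at every $w$. That reasoning works for \emph{scalar}-valued sublinear functionals, but $w \mapsto w_+$ is $\D$-valued, and the pointwise sandwich $-(w-w_n)_+ \le (w_n)_+ - w_+ \le (w_n-w)_+$ does not yield $\D$-norm control, because $\D$ is in general not a Banach lattice (the implication $|f| \le |g| \Rightarrow \|f\|_\D \le \|g\|_\D$ already fails in $H^1$). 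Concretely, your argument uses quasilinearity only to guarantee membership in $\D$; if the reduction were valid it would prove continuity of $w \mapsto w_+$ for \emph{every} symmetric Dirichlet form. But in the non-quasilinear example following \cref{cutof continuity} (where $\D = W^{1,\infty}(0,1)$), taking $w(x)=x-\tfrac12$ and $w_n=w+\tfrac1n$ gives $w_n \to w$ in $\D$ while $\|((w_n)_+-w_+)'\|_\infty = 1$ for all $n$, so $(w_n)_+ \not\to w_+$; yet continuity at $0$ does hold there, so it is precisely your reduction that fails.

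The paper avoids this by proving continuity of $u \mapsto u \wedge v$ at $u=0$ for \emph{every} fixed $v$ (first $v\ge0$, then general $v$ via $v^\pm$), using a squeeze that genuinely needs $\E_1(\lambda v)<\infty$: from $\E_1(\lambda(u_n\wedge v))+\E_1(\lambda(u_n\vee v)) \le \E_1(\lambda u_n)+\E_1(\lambda v)$ together with $\liminf \E_1(\lambda(u_n\vee v)) \ge \E_1(\lambda v)$ (lower semicontinuity in $L^2$), one cancels the \emph{finite} quantity $\E_1(\lambda v)$ to obtain $\E_1(\lambda(u_n\wedge v)) \to 0$. Only then does the translation identity $u_n\wedge v = (u_n-u)\wedge(v-u)+u$ upgrade this to continuity at arbitrary $u$. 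Your computation is this squeeze specialized to $v=0$, where the cancellation is trivial; the general-$v$ case is exactly where quasilinearity does its real work.
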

\begin{proof}
 
 At first, let \(v \in \mathfrak{D}\) such that \(v\geq 0\), let \((u_n)_{n\in \mathbb{N}}\) be a sequence in \(\mathfrak{D}\) converging to \(0\) and \(\lambda>0\). Then \(u_n \wedge v \rightarrow 0\) and \( u_n \vee v \rightarrow v\) in \(\lspace\) for \(n \rightarrow \infty\). Hence, the lower semicontinuity of \(\mathcal{E}\) implies
 \begin{align*}
     \mathcal{E}_1(0)+\mathcal{E}_1(\lambda v)&\leq \liminf_{n\rightarrow \infty} \mathcal{E}_1\big(\lambda(u_n \wedge v)\big)+\mathcal{E}_1\big(\lambda (u_n \vee v)\big) \\ &\leq 
     \limsup_{n\rightarrow \infty} \mathcal{E}_1\big(\lambda(u_n \wedge v)\big)+\mathcal{E}_1\big(\lambda (u_n \vee v)\big)
     \\ &\leq \limsup_{n\rightarrow \infty} \mathcal{E}_1\big(\lambda u_n\big)+\mathcal{E}_1\big(\lambda  v\big),
 \end{align*}
 where we used lower semicontinuity and the first inequality from \cref{def:dirichlet_form}. Since \(\E_1\) is quasilinear, \(\D= \dom \E_1\), and
 since \(\mathcal{E}_1\) is convex and lower semicontinuous on \(\D\), it is continuous on \(\D\). Therefore, since \(\E_1(0)=0\) the previous inequality yields
 \begin{align*}
     \mathcal{E}_1(\lambda v)&= \lim_{n\rightarrow \infty} \mathcal{E}_1\big(\lambda(u_n \wedge v)\big)+\mathcal{E}_1\big(\lambda (u_n \vee v)\big) 
 \end{align*}
 We already know, that
 \begin{align*}
     \mathcal{E}_1(\lambda v)= \mathcal{E}_1(\lambda(v \vee 0)) \leq \liminf_{n\rightarrow \infty} \mathcal{E}_1\big(\lambda (u_n \vee v)\big).
 \end{align*}
 Thus, \(\lim_{n\rightarrow \infty} \mathcal{E}_1\big(\lambda(u_n \wedge v) \big) =0\) and \cref{mod conv chara} implies \(u_n \wedge v \rightarrow 0\) in \(\mathfrak{D}\). In addition, \(v=v \wedge u_n + v \vee u_n -u_n\) which implies
 \begin{align*}
     \lim_{n\rightarrow \infty}v \vee u_n=\lim_{n\rightarrow \infty} v + u_n - v \wedge u_n = v+ 0 +0.
 \end{align*}
 Hence, \(v \vee u_n \rightarrow v \vee 0\).
 
 Now, let \(v \in \mathfrak{D}\) be arbitrary. Since \( \Vert u^+_n \Vert_\D \leq \Vert u_n \Vert_\D \), we know that \(u^+_n , u^-_n \rightarrow 0\). This implies
 \begin{align*}
     v \wedge u_n= v^+ \wedge u_n^+ - v^- \vee u_n^-.
 \end{align*}
 We now use the first part again, which yields 
 \begin{align*}
     \lim_{n\rightarrow \infty}v \wedge u_n&=\lim_{n\rightarrow \infty} v^+ \wedge u_n^+ - v^- \vee u_n^-\\
     &= v^+\wedge0 - v^- \vee 0 \\
     &= 0 + v \wedge 0.
 \end{align*}
 Finally, let \((u_n)_n\) be an arbitrary sequence in \(\mathfrak{D}\) converging to \(u\). Then \(u_n-u\) converges to 0 and
 \begin{align*}
     \lim_{n\rightarrow \infty}  u_n\wedge v =\lim_{n\rightarrow \infty} (u_n-u)\wedge (v-u) + u =0 \wedge (v-u) + u= u \wedge v.
 \end{align*}
Hence, the infimum is separately continuous.
Now, let \((u_n)_{n\in \mathbb{N}}\) and \((v_n)_{n\in \mathbb{N}}\) in \(\mathfrak{D}\) converging to \(u\) and \(v\). Then
\begin{align*}
    v_n \wedge u_n = 0 \wedge (u_n-v_n) + v_n.
\end{align*}
Hence,
\begin{align*}
    \lim_{n  \rightarrow \mathbb{N}} v_n \wedge u_n= 0 \wedge (u-v) + v = v \wedge u.
\end{align*}
The claim for the supremum follows by using the identity
\begin{align*}
    u + v = u \wedge v + u \vee v.
\end{align*}
\end{proof}

\begin{lem}\label{cutof continuity}
 If \(\mathcal{E}\) is a symmetric Dirichlet form, then 
\[ \mathcal{E}\big( -c \vee u \wedge c \big)  \leq \mathcal{E}\big(u\big)\]
for every \(u \in \mathfrak{D},c\geq 0\) and, if \(\mathcal{E}\) is quasilinear, then
\begin{align*}
    \lim_{n\rightarrow \infty} -n \vee u \wedge n = u \text{ in } \D.
\end{align*}
\end{lem}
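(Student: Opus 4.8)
The plan is to obtain both assertions from the Beurling--Deny criterion (\cref{thm:beuerling:deny_criterion}), always applied with the second argument taken to be the zero function.

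First I would prove the inequality. For $c\ge 0$ the map $p_c\colon t\mapsto -c\vee t\wedge c$ is a normal contraction ($1$-Lipschitz, $0\le p_c'\le 1$ a.e., $p_c(0)=0$), and $p_c(u)=-c\vee u\wedge c$. Feeding $u$, the zero function and $p_c$ into \cref{thm:beuerling:deny_criterion} and using $\mathcal{E}(0)=0$ yields $\mathcal{E}(u-p_c(u))+\mathcal{E}(p_c(u))\le\mathcal{E}(u)$; discarding the nonnegative first term gives $\mathcal{E}(-c\vee u\wedge c)\le\mathcal{E}(u)$, which already holds for every $u\in\lspace$, a fortiori for $u\in\mathfrak{D}$.

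Next I would isolate the key intermediate fact: \emph{if $\mathcal{E}$ is a symmetric Dirichlet form and $v\in\dom\mathcal{E}$, then $\mathcal{E}(v-v^{(m)})\to 0$ as $m\to\infty$}, where $v^{(m)}:=-m\vee v\wedge m$. Another application of \cref{thm:beuerling:deny_criterion} with the zero function and $p_m\colon t\mapsto -m\vee t\wedge m$, this time keeping both terms, gives the splitting $\mathcal{E}(v-v^{(m)})+\mathcal{E}(v^{(m)})\le\mathcal{E}(v)$. Since $v^{(m)}\to v$ pointwise with $|v^{(m)}|\le|v|\in\lspace$, dominated convergence gives $v^{(m)}\to v$ in $\lspace$, and lower semicontinuity of $\mathcal{E}$ yields $\mathcal{E}(v)\le\liminf_m\mathcal{E}(v^{(m)})$; combining, $0\le\limsup_m\mathcal{E}(v-v^{(m)})\le\mathcal{E}(v)-\liminf_m\mathcal{E}(v^{(m)})\le 0$. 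I expect this sandwich --- and the observation that Beurling--Deny with vanishing second argument already produces it, so that no quantitative modulus of continuity is needed --- to be the crux of the proof.

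Finally, to get the convergence in $\mathfrak{D}$, assume in addition that $\mathcal{E}$ is quasilinear. Then $\sym\mathcal{E}=\mathcal{E}_1$ (symmetry) and $\dom\mathcal{E}=\dom\mathcal{E}_1$ is a linear subspace (quasilinearity), hence $\mathfrak{D}=\dom\mathcal{E}$; in particular $\lambda u\in\dom\mathcal{E}$ for every $\lambda>0$ when $u\in\mathfrak{D}$, and $u_n:=-n\vee u\wedge n\in\mathfrak{D}$ because $\mathcal{E}(u_n)\le\mathcal{E}(u)<\infty$ by the first part. For fixed $\lambda>0$ one has $\lambda u_n=-\lambda n\vee\lambda u\wedge\lambda n=(\lambda u)^{(\lambda n)}$, so the intermediate fact with $v=\lambda u$ and $m=\lambda n\to\infty$ gives $\mathcal{E}(\lambda(u_n-u))\to 0$; meanwhile $\|\lambda(u_n-u)\|_H^2\to 0$ by dominated convergence, since $|\lambda(u_n-u)|\le\lambda|u|$ and $\lambda(u_n-u)\to 0$ pointwise. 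Thus $\mathcal{E}_1(\lambda(u_n-u))\to 0$ for every $\lambda>0$, and as $u_n-u\in\mathfrak{D}$, \cref{mod conv chara} upgrades this to $u_n-u\to 0$ in $\mathfrak{D}$, i.e. $-n\vee u\wedge n\to u$ in $\mathfrak{D}$. The secondary point to get right is exactly the identification $\mathfrak{D}=\dom\mathcal{E}$ under quasilinearity, which is what makes \cref{mod conv chara} applicable for all $\lambda$ at once.
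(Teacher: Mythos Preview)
Your proposal is correct and follows essentially the same strategy as the paper: a splitting inequality obtained from the Dirichlet property with second argument $0$, lower semicontinuity to squeeze the remainder to zero, and \cref{mod conv chara} to pass to convergence in $\mathfrak{D}$. The only variation is that for the second part the paper invokes \cref{diricheqn2} directly, which produces the splitting $\E_1\bigl(\tfrac{1}{2}(u+u^{(n)})\bigr)+\E_1\bigl(\tfrac{1}{2}(u-u^{(n)})\bigr)\le\E_1(u)$ with a factor $\tfrac12$ in the arguments, whereas your use of \cref{thm:beuerling:deny_criterion} yields the cleaner $\E(u-u^{(m)})+\E(u^{(m)})\le\E(u)$ and lets you treat $\E$ and $\Vert\cdot\Vert_H^2$ separately; both routes finish identically.
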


\begin{proof}

The map \(p: \mathbb{R} \rightarrow \mathbb{R}\) given by \(p(x)=(-n)\vee x \wedge n \) is a normal contraction. Hence, \cref{thm:beuerling:deny_criterion} implies 
\begin{align*}
    \E_1(u - p(u-v)) + \E_1(v+p(u-v)) \leq \E_1(u) + \E_1(v).
\end{align*}
Setting \(v=0\) implies
\begin{align*}
    \E_1((-n)\vee u \wedge n) \leq \E_1 (u),
\end{align*}
since \(\E_1(0)=0\) and \(\E_1 \geq 0\).

For the second part, let us assume that \(\mathcal{E}\) is quasilinear. By Property (\ref{diricheqn2}) in the definition of Dirichlet forms, we have 
\begin{align*}
&\mathcal{E}_1\bigg(v+\frac{1}{2}\big( (u-v+\alpha)_+-(u-v-\alpha)_- \big)\bigg) \\&\quad \quad+\mathcal{E}_1\bigg(u-\frac{1}{2}\big( (u-v+\alpha)_+-(u-v-\alpha)_- \big)\bigg) \leq \mathcal{E}_1(u)+\mathcal{E}_1(v)
\end{align*}
for every \(u,v \in \lspace,\alpha>0\). Plugging in \(v=0\) and replaying \(\alpha\) by \(n\) yields
\begin{align}\label{cutof continuity:eqn1}
    \mathcal{E}_1\bigg( \frac{1}{2}\big(u+ (-n) \vee u \wedge n \big)\bigg) +\mathcal{E}_1\bigg(\frac{1}{2}\big(u- (-n) \vee u \wedge n \big)\bigg) \leq \mathcal{E}_1(u).
\end{align}
Let \(\lambda >0\).
Since \(\lim_{n\rightarrow \infty} -n \vee u \wedge n = u\) in \( \lspace\), the lower semicontinuity of \( \mathcal{E}_1\) and the previous inequality imply
\begin{align*}
    \mathcal{E}_1(0)+\mathcal{E}_1(\lambda u) &\leq \liminf_{n\rightarrow \infty} \mathcal{E}_1\bigg( \frac{\lambda}{2}\big(u+ (-n) \vee u \wedge n \big)\bigg) +\mathcal{E}_1\bigg(\frac{\lambda}{2}\big(u- (-n) \vee u \wedge n \big)\bigg)\\
    &\leq \limsup_{n \rightarrow \infty} \mathcal{E}_1\bigg( \frac{\lambda}{2}\big(u+ (-n) \vee u \wedge n \big)\bigg) +\mathcal{E}_1\bigg(\frac{\lambda}{2}\big(u- (-n) \vee u \wedge n \big)\bigg) \\
    &\leq \mathcal{E}_1(\lambda u),
\end{align*}
Since \(\E_1(0)=0\),
\begin{align}\label{cutof continuity:eqn2}
    \lim_{n\rightarrow \infty} \mathcal{E}_1\bigg( \frac{\lambda}{2}\big(u+ (-n) \vee u \wedge n \big)\bigg) +\mathcal{E}_1\bigg(\frac{\lambda}{2}\big(u- (-n) \vee u \wedge n \big)\bigg) = \mathcal{E}_1(\lambda u).
\end{align}

Since \(\mathcal{E}_1\) is lower semicontinuous, 
\begin{align*}
    \mathcal{E}_1(\lambda u) \leq \liminf_{n\rightarrow \infty} \mathcal{E}_1\bigg( \frac{\lambda}{2}\big(u+ (-n) \vee u \wedge n \big)\bigg).
\end{align*}

Since \(\E_1\) is quasilinear, \(\mathcal{E}_1(\lambda u)\) is finite and 
\begin{align*}
    0&\leq \limsup_{n\rightarrow \infty} \mathcal{E}_1\bigg( \frac{\lambda}{2}\big(u- (-n) \vee u \wedge n \big)\bigg) \\
    &\leq \limsup_{n\rightarrow \infty} \mathcal{E}_1\bigg( \frac{\lambda}{2}\big(u- (-n) \vee u \wedge n \big)\bigg)+
    \mathcal{E}_1\bigg( \frac{\lambda}{2}\big(u+ (-n) \vee u \wedge n \big)\bigg) 
    \\&\;+  \limsup_{n \rightarrow \infty} -\mathcal{E}_1\bigg( \frac{\lambda}{2}\big(u+ (-n) \vee u \wedge n \big)\bigg) 
    \\ & \leq \E_1(\lambda u) - \E_1(\lambda u) \\ &=0
\end{align*}

Hence,
\begin{align*}
    \lim_{n\rightarrow \infty} \mathcal{E}_1\bigg(\frac{\lambda}{2}\big(u- (-n) \vee u \wedge n \big)\bigg) = 0.
\end{align*}
\Cref{mod conv chara} implies \(\frac{1}{2}\big(u- (-n) \vee u \wedge n \big) \rightarrow 0\) for \( n\rightarrow \infty\) in \(\mathfrak{D}\). Since \(u\) was arbitrary, this yields \((-n) \vee u \wedge n \rightarrow u\) in \(\mathfrak{D}\) for every \(u \in \mathfrak{D}\) and \(n \rightarrow \infty\).
\end{proof}

\begin{corollary}
Let \(u,g \in \mathfrak{D}\). Then \(u \wedge ( c-g) \in \mathfrak{D}\) for every \(c>0\).
\end{corollary}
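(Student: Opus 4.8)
The plan is to reduce the claim, via the $m$-a.e.\ identity
\[
    u \wedge (c-g) \;=\; \big((u+g)\wedge c\big) - g
\]
(which holds because $\min(a,b)-t=\min(a-t,b-t)$), to the single assertion: if $f \in \mathfrak{D}$ and $c>0$, then $f\wedge c \in \mathfrak{D}$. Indeed $\mathfrak{D}$ is a linear space, so $f:=u+g \in \mathfrak{D}$, and once $f\wedge c \in \mathfrak{D}$ is known, subtracting $g \in \mathfrak{D}$ yields the result; as a by-product this also shows that the a priori only pointwise-defined function $u\wedge(c-g)$ genuinely lies in $\lspace$.

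To handle $f\wedge c$ I would first strip off the negative part. By \cref{Thm:D_Riesz_Subspace} the space $\mathfrak{D}$ is a Riesz subspace of $\lspace$, so $f^+, f^- \in \mathfrak{D}$; since $f\wedge c = (f^+\wedge c) - f^-$ pointwise and $f^- \in \mathfrak{D}$, it suffices to truncate a nonnegative element. So set $h = f^+ \ge 0$. Because $\mathcal{E}$ is symmetric we have $\sym\mathcal{E} = \mathcal{E}_1$, so $h \in \mathfrak{D}$ means $\mathcal{E}_1(\lambda h) < \infty$ for some $\lambda > 0$. For that $\lambda$ we have $\lambda h \ge 0 \ge -\lambda c$, hence
\[
    \lambda(h\wedge c) \;=\; (\lambda h)\wedge(\lambda c) \;=\; (-\lambda c)\vee(\lambda h)\wedge(\lambda c).
\]
Now $p(x) = (-\lambda c)\vee x \wedge (\lambda c)$ is a normal contraction (note $\lambda c > 0$), so exactly as in the proof of \cref{cutof continuity} — apply the Beurling--Deny criterion \cref{thm:beuerling:deny_criterion} to $\mathcal{E}_1$ with $v=0$ and use $\mathcal{E}_1 \ge 0$, $\mathcal{E}_1(0)=0$ — one gets $\mathcal{E}_1\big((-\lambda c)\vee(\lambda h)\wedge(\lambda c)\big) \le \mathcal{E}_1(\lambda h) < \infty$. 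Thus $\lambda(h\wedge c) \in \dom\mathcal{E}_1 \subseteq \mathfrak{D}$, so $h\wedge c \in \mathfrak{D}$, whence $f\wedge c = (f^+\wedge c) - f^- \in \mathfrak{D}$ and finally $u\wedge(c-g) \in \mathfrak{D}$.

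The main obstacle — and the reason this is not an immediate consequence of \cref{Thm:D_Riesz_Subspace} — is that the constant function $c$ need not belong to $\lspace$ (when $m(X)=\infty$), let alone to $\mathfrak{D}$, so one cannot simply perform $u\wedge(c-g)$ as a lattice operation inside $\mathfrak{D}$; the cutoff estimate of \cref{cutof continuity}/\cref{thm:beuerling:deny_criterion} is precisely what replaces this. Two routine but necessary technicalities deserve mention: membership in $\mathfrak{D}$ only yields finiteness of $\mathcal{E}_1$ after rescaling by a possibly small $\lambda > 0$, which is harmless since $\mathfrak{D}$ is a vector space; and the reduction to $h\ge 0$ is exactly what makes the two-sided truncation $(-\lambda c)\vee\,\cdot\,\wedge(\lambda c)$ coincide with the one-sided one $\,\cdot\wedge(\lambda c)$, so that the cutoff lemma applies verbatim. (Alternatively one could avoid the $f^\pm$-splitting and instead set $z:=(-\lambda c)\vee(\lambda f)\wedge(\lambda c)\in\mathfrak{D}$, observe $(\lambda f)\wedge(\lambda c)=z\wedge(\lambda f)$ pointwise, and conclude by \cref{Thm:D_Riesz_Subspace}.)
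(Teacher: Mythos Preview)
Your argument is correct and follows the same route as the paper: the paper's entire proof is the single identity $u\wedge(c-g)=\big((u+g)\wedge c\big)-g$, with the fact that $(u+g)\wedge c\in\mathfrak{D}$ left as an immediate consequence of the preceding \cref{cutof continuity}. You reproduce this identity and then make explicit the step the paper suppresses---namely that $f\wedge c\in\mathfrak{D}$ for $f\in\mathfrak{D}$---by passing to $f^+$ so that the two-sided cutoff $(-\lambda c)\vee\cdot\wedge(\lambda c)$ agrees with the one-sided one and the cutoff estimate applies; your parenthetical alternative (take $z=(-\lambda c)\vee(\lambda f)\wedge(\lambda c)$ and note $(\lambda f)\wedge(\lambda c)=z\wedge(\lambda f)$) is an equally valid way to close the same gap.
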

\begin{proof}
\begin{align*}
    u \wedge ( c-g)= \big( (u+g)\wedge c \big)-g
\end{align*}
\end{proof}

We give an example of a Dirichlet form where the lattice operations are not continuous on the associated Dirichlet space.

\begin{exmp}
    Let \(X=[0,1]\) with the usual Lebesgue measure. Let
    \begin{align*}
        \E(u)= \begin{cases} 0 &\text{ if \(u'\) exists and \(\vert u' \vert \leq 1 \) a.e. } \\
        \infty &\text{else}.
        \end{cases}
    \end{align*}
    Let us show that \(\E\) is lower semicontinuous. Let \((u_n)_{n \in \mathbb{N}}\) be a sequence in \(\lspace\) converging to \(u\) such that \(\E(u_n)=0\) for every \(n \in \mathbb{N}\). Then, \(\Vert u'_n \Vert_{L^\infty} \leq 1 \) and there exists \(v \in L^\infty(X), \Vert v \Vert_\infty \leq 1\) such that, up to a subsequence, \(u_n' \rightarrow v \) in the weak* topology of \(L^{\infty}(X)\). For every \( \phi \in C_c^\infty(X) \)
    \begin{align*}
        \int_X u_n' \phi = - \int_X u_n \phi'.
    \end{align*}
    Passing to the limit yields 
       \begin{align*}
        \int_X v \phi = - \int_X u \phi'.
    \end{align*} 
    Therefore \(u'=v\) and \(\E(u)=0\). 
    
    It is easy to see that \(\E\) is a Dirichlet form. The Dirichlet space \(\D\) coincides with \(W^{1,\infty}(X)\). Let \(f(x)=x\) and \(g_n(x)= \frac{1}{n}\). Then \(g_n\rightarrow 0 \) in \(\D\) but 
    \begin{align*}
        \Vert f \wedge g_n \Vert_{\D} \geq 1
    \end{align*}
    for every \(n\in \mathbb{N}\). Hence, \(f \wedge g_n \) does not converge to \(0\) in \(\D\).
\end{exmp}

\section{Capacity}

One tool in the study of classical Dirichlet forms is the capacity. For a reference in the classical case, see for example \cite{FOT_Dirichlet_forms_Markov_Processes}. From now on let us assume that \(X\) is a topological measure space and \(\supp m =X\).

In this section \(\mathcal{E}\) denotes a symmetric Dirichlet form and \((\mathfrak{D},\Vert.\Vert_\D)\) the associated Dirichlet space.

\begin{defn}

Let \(A \subset X \). We define the set
\begin{align*}
    \mathcal{L}_A=\{u \in \lspace \setdelim u\geq 1 \text{ on } U, A \subset U, U \text{ open } \},
\end{align*}
and the \underline{(norm-)capacity} by
\[\normcap(A)=\inf\{\Vert u \Vert_\D \setdelim u \in \mathcal{L}_A\} .\]

\end{defn}

\begin{lem}
 Let \(A,B \subset X\). Then \(\normcap(A \cup B) \leq \normcap(A)+\normcap(B)\) and if \(A\subset B\), then \(\normcap(A) \leq \normcap(B)\).
\end{lem}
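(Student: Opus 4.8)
The plan is to treat the two assertions separately, beginning with monotonicity, which is immediate from the definition of $\mathcal{L}_A$. If $A \subset B$, then every open set $U$ with $B \subset U$ also satisfies $A \subset U$, so any $u \in \lspace$ witnessing $u \in \mathcal{L}_B$ (i.e.\ $u \geq 1$ a.e.\ on such a $U$) equally witnesses $u \in \mathcal{L}_A$. Hence $\mathcal{L}_B \subset \mathcal{L}_A$, and since $\normcap(A)$ is the infimum of $\Vert \cdot \Vert_\D$ over the \emph{larger} set $\mathcal{L}_A$, we get $\normcap(A) \leq \normcap(B)$. That settles the monotonicity claim.

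For subadditivity I would first dispose of the trivial case: if $\normcap(A) = \infty$ or $\normcap(B) = \infty$ there is nothing to prove, so assume both are finite; in particular $\mathcal{L}_A, \mathcal{L}_B \neq \emptyset$ and we may restrict attention to $u \in \mathcal{L}_A$, $v \in \mathcal{L}_B$ with $u, v \in \D$ (otherwise $\Vert u \Vert_\D + \Vert v \Vert_\D = \infty$ and the desired bound is vacuous). The key structural observation is that $\mathcal{L}$ is compatible with the lattice operation $\vee$: choosing open sets $U \supset A$, $V \supset B$ with $u \geq 1$ a.e.\ on $U$ and $v \geq 1$ a.e.\ on $V$, we get $u \vee v \geq 1$ a.e.\ on the open set $U \cup V \supset A \cup B$, so $u \vee v \in \mathcal{L}_{A \cup B}$ (and $u\vee v\in\D$ by \cref{Thm:D_Riesz_Subspace}).

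It then remains to estimate $\Vert u \vee v \Vert_\D$, and here I would reduce to \cref{Thm:D_Riesz_Subspace_Norm}, which provides the analogous bound for $\wedge$. Using that $\E$ is symmetric, hence $\Vert -w \Vert_\D = \Vert w \Vert_\D$ for every $w \in \D$, together with the identity $u \vee v = -\big((-u) \wedge (-v)\big)$, I obtain
\[
\Vert u \vee v \Vert_\D = \Vert (-u) \wedge (-v) \Vert_\D \leq \Vert -u \Vert_\D + \Vert -v \Vert_\D = \Vert u \Vert_\D + \Vert v \Vert_\D .
\]
Therefore $\normcap(A \cup B) \leq \Vert u \Vert_\D + \Vert v \Vert_\D$ for all such $u,v$; taking the infimum over $u \in \mathcal{L}_A$ and then over $v \in \mathcal{L}_B$ (the right-hand side splits) yields $\normcap(A \cup B) \leq \normcap(A) + \normcap(B)$.

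There is no genuinely hard step here; the only points requiring a little care are the almost-everywhere reading of ``$u \geq 1$ on $U$'' when verifying $u \vee v \in \mathcal{L}_{A\cup B}$, and the appeal to symmetry of $\E$ (equivalently of $\E_1$, hence of $\Vert \cdot \Vert_\D$) to convert the $\wedge$-estimate of \cref{Thm:D_Riesz_Subspace_Norm} into a $\vee$-estimate. Alternatively one could avoid this reduction and rerun the scaling/contradiction argument of \cref{Thm:D_Riesz_Subspace_Norm} directly with $\vee$ in place of $\wedge$, now invoking the first projection inequality \cref{diricheqn1} of \cref{def:dirichlet_form} (applied to $\E_1$) and \cref{unit ball prop}; but the symmetry reduction is shorter.
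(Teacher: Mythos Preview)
Your proof is correct and follows the same approach as the paper: both arguments establish monotonicity directly from $\mathcal{L}_B \subset \mathcal{L}_A$, and both obtain subadditivity by noting $u \vee v \in \mathcal{L}_{A\cup B}$ together with the estimate $\Vert u \vee v \Vert_\D \leq \Vert u \Vert_\D + \Vert v \Vert_\D$. The only difference is that you explicitly derive the $\vee$-estimate from \cref{Thm:D_Riesz_Subspace_Norm} via the symmetry identity $u \vee v = -\big((-u)\wedge(-v)\big)$, whereas the paper simply asserts the $\vee$-inequality without spelling out this reduction.
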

\begin{proof}
For the first part, let us consider two arbitrary functions \(f_A\in \mathcal{L}_A,f_B\in \mathcal{L}_B\). Then
\begin{align*}
    \normcap(A \cup B) \leq \Vert f_A \vee f_B \Vert_\D \leq \Vert f_A  \Vert_\D+\Vert  f_B \Vert_\D.
\end{align*}
Since \(f_A,f_B\) are arbitrary, this implies the first claim.
 The second part follows directly from the definition and properties of the infimum.

\end{proof}
\begin{lem}
Let \(A_n ,A \subset X\) such that \(\bigcup_{n=1}^\infty A_n =A\). Then \(\sum_{n =1}^\infty \normcap(A_n)\geq \normcap(A)\).
\end{lem}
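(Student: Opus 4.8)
The statement is countable subadditivity of the capacity: if $A = \bigcup_{n=1}^\infty A_n$, then $\normcap(A) \leq \sum_{n=1}^\infty \normcap(A_n)$. I plan to mimic the classical Dirichlet-form argument, using an $\epsilon/2^n$ device together with the lattice structure of $\D$ established in \cref{Thm:D_Riesz_Subspace} and the norm estimate for suprema from \cref{Thm:D_Riesz_Subspace_Norm}, but iterated to finitely many functions and then passed to a limit.

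First I would dispose of the trivial case: if $\sum_n \normcap(A_n) = \infty$ there is nothing to prove, so assume the sum is finite. Fix $\epsilon > 0$. For each $n$, choose $u_n \in \mathcal{L}_{A_n}$ with $\Vert u_n \Vert_\D \leq \normcap(A_n) + \epsilon 2^{-n}$; each $u_n$ comes with an open set $U_n \supset A_n$ on which $u_n \geq 1$. The natural candidate for a competitor in $\mathcal{L}_A$ is the supremum $w = \sup_n u_n$, which is $\geq 1$ on the open set $U = \bigcup_n U_n \supset A$. The two things to check are that $w \in \lspace$ (indeed $w \in \D$) and that $\Vert w \Vert_\D$ is controlled by $\sum_n \Vert u_n \Vert_\D$. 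For the norm bound I would first treat the finite partial suprema $w_N = u_1 \vee \dots \vee u_N$: by \cref{Thm:D_Riesz_Subspace} each $w_N \in \D$, and iterating \cref{Thm:D_Riesz_Subspace_Norm} (which gives $\Vert u \vee v \Vert_\D \le \Vert u\Vert_\D + \Vert v\Vert_\D$ in the same way it is stated for $u\wedge v$, since $\E$ is symmetric and $\E(u\wedge v)+\E(u\vee v)\le \E(u)+\E(v)$) gives
\begin{align*}
    \Vert w_N \Vert_\D \leq \sum_{n=1}^N \Vert u_n \Vert_\D \leq \sum_{n=1}^\infty \normcap(A_n) + \epsilon.
\end{align*}
Thus $(w_N)_N$ is a bounded, nondecreasing sequence in $\D$; in $\lspace$ it increases to $w$, and since $w_N \le w \le \sup_m w_m$ pointwise with $\Vert w_N\Vert_2 \le \Vert w_N\Vert_\D$ uniformly bounded, monotone convergence shows $w \in \lspace$ and $w_N \to w$ in $\lspace$.

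Now I would use the lower semicontinuity of $\Vert \cdot \Vert_\D$ on $\lspace$, proved in \cref{dirichlet space is banach}: since $w_N \to w$ in $\lspace$,
\begin{align*}
    \Vert w \Vert_\D \leq \liminf_{N \to \infty} \Vert w_N \Vert_\D \leq \sum_{n=1}^\infty \normcap(A_n) + \epsilon,
\end{align*}
so in particular $w \in \D$. Since $w \geq 1$ on the open neighbourhood $U$ of $A$, we have $w \in \mathcal{L}_A$ and hence $\normcap(A) \leq \Vert w \Vert_\D \leq \sum_{n=1}^\infty \normcap(A_n) + \epsilon$. Letting $\epsilon \downarrow 0$ finishes the proof.

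The main obstacle is purely a matter of care rather than depth: one must make sure the supremum $w$ of infinitely many $u_n$ actually lands in $\lspace$ before $\D$-statements make sense, and that the iterated subadditivity of the $\D$-norm under finite suprema is legitimate. This is why I route everything through the finite partial suprema $w_N$ (which \cref{Thm:D_Riesz_Subspace} and \cref{Thm:D_Riesz_Subspace_Norm} handle directly) and only at the end invoke monotone convergence in $\lspace$ together with the $\lspace$-lower semicontinuity of $\Vert\cdot\Vert_\D$ to transfer the uniform bound to the limit. A secondary point worth a sentence is that \cref{Thm:D_Riesz_Subspace_Norm} is stated for $u \wedge v$; the identical argument — using $\E(u\vee v) \le \E(u) + \E(v) - \E(u\wedge v) \le \E(u)+\E(v)$ from \cref{diricheqn1} and \cref{unit ball prop} — yields the same bound for $u \vee v$, which is the form I need here.
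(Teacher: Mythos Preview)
Your argument is correct and essentially matches the paper's: both choose near-minimizers $u_n$ via the $\epsilon/2^n$ device, bound $\Vert u_1\vee\dots\vee u_N\Vert_\D$ by iterating (the $\vee$-version of) \cref{Thm:D_Riesz_Subspace_Norm}, and then pass to the limit using the lower semicontinuity of $\Vert\cdot\Vert_\D$ on $\lspace$ from \cref{dirichlet space is banach}. The only cosmetic difference is in that last step---the paper first truncates to $u_n\wedge 1$ and extracts a weak $\lspace$-limit of the partial suprema, whereas you use monotone convergence to get strong $\lspace$-convergence $w_N\to w$; either route works.
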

 \begin{proof}

 Let \(A_n,A \subset X\) as above. 
 
Let us assume \(\sum_{n =1}^\infty \normcap(A_n) < \infty\). Otherwise there is nothing to show. Let \( \epsilon>0\).
 By the properties of the infimum, we can choose functions \(u_n \in \lspace\) and open sets \(U_n\) for every \(n \in \mathbb{N}\) such that \(u_n \geq 1\) on \(U_n\) and \(A_n \subset U_n\)
\[\Vert u_n \Vert_\D \leq \normcap(A_n) + \epsilon 2^{-n}.\]
Without loss of generality we assume \(u_n\leq 1 \), otherwise take \(u_n \wedge 1\). Set
\begin{align*}
    U= \bigcup_{n=1}^\infty U_n
\end{align*}
and
\begin{align*}
    g_n=\bigvee_{i=1}^n u_i.
\end{align*}
By \cref{Thm:D_Riesz_Subspace_Norm},
\begin{align*}
    \Vert g_n \Vert_\D \leq \sum_{n =1}^\infty \normcap(A_n) +\epsilon < \infty.
\end{align*}
Since \(\Vert \cdot \Vert_{\lspace} \leq \Vert \cdot \Vert_\D\), the sequence \(g_n\) is bounded in \(L^2\). Thus, there exists a weakly convergent subsequence, again denoted by \(g_n\), such that \(g_n \rightharpoonup g\) for some \(g \in \lspace\). 
For every \(k\in \mathbb{N}\) we have \(g_n \rightharpoonup g\) in \( L^2(\bigcup_{j=1}^k U_j,m)\), but \(g_i=1\) in \( L^2(\bigcup_{j=1}^k U_j,m)\) for every \(i>k\). Hence, \(g=1\) on \(U_k\) for every \(k \in \mathbb{N}\). Thus, \(g=1\) on \(U\) and, by the weak lower semicontinuity of \(\Vert \cdot \Vert_\D\) on \(\lspace\), we have
\begin{align*}
 \normcap(A)&\leq  \Vert g \Vert_\D \\ &\leq \liminf_{n \rightarrow \infty} \Vert g_n \Vert_\D \\ &\leq  \sum_{n=1}^\infty \normcap(A_n) + \epsilon . 
\end{align*}
Since \(\epsilon\) was arbitrary, this concludes the proof.

\end{proof}

\begin{lem}
Let \(K_n \subset X\) be compact subsets such that \(K_n \downarrow K \). Then
\begin{align*}
    \normcap(K)= \inf_{n \in \mathbb{N}} \normcap(K_n).
\end{align*}
\end{lem}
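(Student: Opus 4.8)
The plan is to establish the two inequalities separately. One direction is immediate: since \(K \subset K_n\) for every \(n\), the monotonicity of \(\normcap\) proved in the previous lemma gives \(\normcap(K) \le \normcap(K_n)\) for all \(n\), hence \(\normcap(K) \le \inf_{n \in \mathbb{N}} \normcap(K_n)\).

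For the reverse inequality, fix \(\epsilon > 0\) and choose \(u \in \mathcal{L}_K\) with \(\Vert u \Vert_\D \le \normcap(K) + \epsilon\); by the definition of \(\mathcal{L}_K\) there is an open set \(U\) with \(K \subset U\) and \(u \ge 1\) on \(U\). The crucial step is the purely topological observation that \(K_N \subset U\) for some \(N \in \mathbb{N}\). Indeed, each \(K_n \setminus U\) is a closed subset of the compact set \(K_1 \setminus U\) (closed because \(U\) is open), and these sets form a decreasing sequence with
\begin{align*}
    \bigcap_{n=1}^\infty (K_n \setminus U) = \Big( \bigcap_{n=1}^\infty K_n \Big) \setminus U = K \setminus U = \emptyset .
\end{align*}
By the finite intersection property of compact sets, some \(K_N \setminus U\) must be empty, i.e. \(K_N \subset U\). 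Consequently \(u \in \mathcal{L}_{K_N}\), so \(\normcap(K_N) \le \Vert u \Vert_\D \le \normcap(K) + \epsilon\), and therefore \(\inf_{n \in \mathbb{N}} \normcap(K_n) \le \normcap(K) + \epsilon\). Letting \(\epsilon \to 0\) yields \(\inf_{n \in \mathbb{N}} \normcap(K_n) \le \normcap(K)\), which together with the first inequality proves the claim.

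The only genuine obstacle is the compactness argument showing that a decreasing sequence of compact sets shrinking into an open set is eventually contained in that set; once this is in hand, the rest is routine bookkeeping with the definition of \(\normcap\) and the monotonicity lemma already established. Note in particular that no separation axiom on \(X\) is needed beyond what is implicit in "topological measure space", since the argument only uses closedness of \(K_n \setminus U\) inside the compact set \(K_1 \setminus U\).
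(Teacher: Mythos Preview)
Your proof is correct and follows essentially the same approach as the paper: monotonicity for one inequality, then for the reverse choose a near-optimal test function for \(K\) together with its open set \(U\), and use the finite intersection property inside the compact set \(K_1\) (the paper writes this as \(\bigcap_i K_i \cap (U^c \cap K_1) = \emptyset\), you as \(\bigcap_n (K_n \setminus U) = \emptyset\)) to conclude that some \(K_N \subset U\). The only cosmetic difference is that the paper picks \(f \in \mathcal{L}_U\) rather than \(u \in \mathcal{L}_K\), which amounts to the same thing once the open set \(U\) is fixed.
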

\begin{proof}
By the monotonicity of \( \normcap\), \(\normcap(K)\leq \inf_{n \in \mathbb{N}} \normcap(K_n) \).

For the converse inequality, let us choose an \(\epsilon>0\), an open neighborhood \(U\) of \(K\) and a function \(f \in \mathcal{L}_U\) such that
\begin{align*}
    \Vert f\Vert_\D \leq \normcap(K) +\epsilon.
\end{align*}

We know
\begin{align*}
    \emptyset = K \cap U^c = \bigcap_{i=1}^\infty K_i \cap (U^c\cap K_1).
\end{align*}
For every \(i \in \mathbb{N}\), \(K_i \subset K_1\) and \(U^c \cap K_1 \subset K_1\). Since \(K_1\) is compact, we can apply the finite intersection property. Hence, there are finitely many \(K_{i_1},\dots, K_{i_n}\) such that 
\begin{align*}
\emptyset  = \bigcap_{k=1}^n K_{i_k} \cap (U^c\cap K_1)=K_{i_n} \cap (U^c\cap K_1) = K_{i_n} \cap U^c.    
\end{align*}
Thus, \(U\) is an open neighborhood of \(K_{i_n}\) and, by the definition of the capacity, 
\begin{align*}
    \inf_{n \in \mathbb{N}} \normcap(K_n) \leq \normcap(K_{i_n}) \leq \Vert f \Vert_\D \leq \normcap(K) +\epsilon.
\end{align*}
Since \(\epsilon\) is arbitrary, the claim follows.
\end{proof}

\begin{defn}
We call a set \(A\subset X\) \underline{polar}, if \( \normcap(A)=0\), and some property holds \underline{quasi everywhere} (q.e.), if it holds up to a polar set.
\end{defn}

\begin{exmp}
    Let \(X \subset \mathbb{R}^n\) a domain, \(m\) the Lebesgue measure and \(p \geq 1\). Then,
    \begin{align*}
        \E(u)= \frac{1}{p} \int_X \vert \nabla u \vert ^p
    \end{align*}
    is a convex, lower semicontinuous Dirichlet form. The Dirichlet space \(\D= W^1_{p,2}(X)=\{u \in \lspace \setdelim \nabla u \in L^p\}\) and the capacity of \(\E\) is the relative \(p\)-capacity (or the usual \(p\)-capacity if \(X=\mathbb{R}^n\)). Let \(\alpha \geq 2\). The perturbation
    \begin{align*}
        \E_\alpha (u)= \E(u) + \int_X \vert u \vert^ \alpha
    \end{align*}
    is also a convex, lower semicontinuous Dirichlet form. 
    
    Let \(A \subset X\) be a \(\normcap_{\E}\)-polar set. Then there is a sequence \((u_n)_{n\in \mathbb{N}}\) in \(\D\) such that \(u_n =1 \) on a neighbourhood of \(A\) and \(u_n \rightarrow 0\) in \(\D\). Hence, \(u_n \rightarrow 0\) in \(\lspace\). By the Hölder inequality \(u_n \rightarrow 0 \) in \(L^\alpha(X)\). Therefore \(A\) is \(\normcap_{\E_\alpha}\)-polar. The converse implication, namely that every \(\normcap_{\E_\alpha}\)-polar set is \(\normcap_{\E}\)-polar,  is clear. Hence, the capacities possess the same polar sets. Note carefully that the corresponding Dirichlet spaces need not be the same.
\end{exmp}

\section{Quasicontinuity}
In this section \(\E\) denotes a symmetric Dirichlet form.
\begin{defn}
 We say \(f: X  \rightarrow Y\) for some topological space \(Y\) is \underline{quasicontinuous}, if for every \(\epsilon > 0 \) there is an open set \(O\subset X \) such that \(\normcap(O) \leq \epsilon \) and \(f|_{O^c}\) is continuous.

Additionally we call \(f \in \lspace\) quasicontinuous, if there is a representative which is quasicontinuous. Whenever this is the case, we denote this representative again by \(f\).
\end{defn}

\begin{lem}\label{lem:polar_implies_nullset}
 Let \( A \subset X\) be measurable and polar. Then \(m(A)=0\).
\end{lem}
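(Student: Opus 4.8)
The statement is that a measurable polar set $A$ has $m(A) = 0$. The plan is to use the definition of $\normcap$: since $\normcap(A) = 0$, for every $n \in \mathbb{N}$ there is a function $u_n \in \mathcal{L}_A$ — so $u_n \geq 1$ on some open set $U_n \supset A$ — with $\Vert u_n \Vert_\D < 1/n$. Replacing $u_n$ by $(u_n \vee 0) \wedge 1$, which by \cref{cutof continuity} does not increase the $\D$-norm (the map $x \mapsto 0 \vee x \wedge 1$ is a normal contraction, hence $\E_1$ does not increase, hence $\Vert \cdot \Vert_\D$ does not increase by the unit ball property), I may assume $0 \leq u_n \leq 1$ and $u_n = 1$ on $U_n \supset A$.

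**Passing to $L^2$ and extracting a pointwise limit.** Since $\Vert \cdot \Vert_{L^2} \leq \Vert \cdot \Vert_\D$ by \cref{dirichlet space is banach}, we have $\Vert u_n \Vert_{L^2} < 1/n$, so $u_n \to 0$ in $\lspace$. Therefore a subsequence, still denoted $u_n$, converges to $0$ $m$-almost everywhere. But $u_n = 1$ on $U_n \supset A$, hence $u_n(x) = 1$ for every $x \in A$ and every $n$; in particular $u_n(x) \not\to 0$ for $x \in A$. Consequently $A$ is contained (up to the $m$-null set where the subsequence fails to converge to $0$) in the set $\{x : u_n(x) \not\to 0\}$, which has measure zero. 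Since $A$ is assumed measurable, $m(A) = 0$.

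**Main obstacle.** The only subtlety is the measure-theoretic bookkeeping: the pointwise convergence $u_n \to 0$ a.e. holds off an $m$-null set $N$, and we conclude $A \subset N$ up to a null set, so $A \setminus N$ is null; combined with $m(N) = 0$ and measurability of $A$ this gives $m(A) = 0$. One must also be slightly careful that the $u_n$ are only defined as $\lspace$-equivalence classes, so "$u_n = 1$ on $U_n$" means the a.e.-defined representative equals $1$ a.e. on $U_n$; since $\supp m = X$ and $U_n$ is open and nonempty (it contains $A$; if $A = \emptyset$ the claim is trivial), this is consistent, and the argument still shows $u_n = 1$ a.e. on $A$ against $u_n \to 0$ a.e., forcing $m(A) = 0$. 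I do not expect any real difficulty here; the result is essentially immediate from the definition of capacity together with the continuity of the embedding $\D \hookrightarrow \lspace$ and the truncation estimate of \cref{cutof continuity}.
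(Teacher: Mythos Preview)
Your argument is correct and rests on the same key ingredient as the paper's proof, namely the continuous embedding \(\D \hookrightarrow \lspace\). The paper, however, avoids the subsequence extraction entirely: once one has \(f_n \in \mathcal{L}_A\) with \(f_n \geq 1\) on an open neighbourhood of \(A\) and \(\Vert f_n \Vert_\D \leq 1/n\), the direct chain
\[
m(A) = \Vert 1_A \Vert_{\lspace}^2 \leq \Vert f_n \Vert_{\lspace}^2 \leq \Vert f_n \Vert_\D^2 \leq \frac{1}{n^2}
\]
already gives \(m(A)=0\). Your detour through truncation and pointwise a.e.\ convergent subsequences is valid but unnecessary; in fact the truncation step is superfluous even within your own approach, since \(u_n \to 0\) in \(\lspace\) holds without any modification, and \(u_n \geq 1\) a.e.\ on \(A\) already suffices to force \(m(A)=0\).
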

\begin{proof}
 Let \( A \subset X\) be measurable and polar. Then there exists a sequence of open sets \(A \subset O_n \subset X\) and functions \(f_n\geq 0\) such that \(f_n\geq 1\) on \(O_n\) and \(  \Vert f_n\Vert_\D \leq \frac{1}{n}\). Hence,
 \begin{align*}
      m(A)=\Vert 1_A \Vert_2^2 \leq \Vert f_n \Vert_2^2 \leq \Vert f_n \Vert_\D^2 \leq \frac{1}{n^2},
 \end{align*}
 which implies \(m(A)=0\).
\end{proof}

\begin{thm} \label{aeqe}
Let \(U \subset X\) be an open subset and \(f:X \rightarrow \mathbb{R} \) quasicontinuous. Then
\begin{align*}
f \geq 0 \text{ a.e. on }U \Longleftrightarrow f \geq 0 \text{ q.e. on }U
\end{align*}
\end{thm}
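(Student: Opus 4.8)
The direction "$\Rightarrow$" is the substantive one; the reverse implication "$\Leftarrow$" is immediate from \cref{lem:polar_implies_nullset}, since if $f < 0$ only on a polar subset of $U$, that subset has measure zero, so $f \geq 0$ a.e.\ on $U$. For the forward direction, assume $f \geq 0$ a.e.\ on $U$; I want to show the set $N = \{x \in U \setdelim f(x) < 0\}$ is polar. The plan is to use quasicontinuity to reduce to a statement about continuous functions, where "a.e." and "everywhere" can be compared via the support assumption $\supp m = X$.

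Here are the steps in order. First, fix $\epsilon > 0$ and, using quasicontinuity, choose an open set $O$ with $\normcap(O) \leq \epsilon$ and $f|_{O^c}$ continuous. Second, I claim $N \cap O^c$ is empty: if $x_0 \in O^c$ with $f(x_0) < 0$, then by continuity of $f|_{O^c}$ there is a relatively open neighborhood $V$ of $x_0$ in $O^c$ on which $f < 0$; writing $V = W \cap O^c$ for $W$ open in $X$, the set $W \setminus O$ is contained in $O^c$ and $f < 0$ there. Now $W \setminus O$ is open in $X$ (it equals $W \cap O^c$... no, $O^c$ is closed, so I must be careful)—the honest version is: $W$ is open, $W \cap N$ contains a point, and $W \setminus O \subset O^c$, so $f<0$ on $W\setminus O$; but $m(W) > 0$ since $\supp m = X$ and $W$ is a nonempty open set, while $m(W \cap O) \leq m(O)$, which need not be small. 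So instead I should argue: the set $\{x \in O^c \setdelim f(x) < 0\}$ is relatively open in $O^c$; its intersection with any nonempty open $W \subset X$ on which it is "full" would force $m(\{f<0\}) > 0$ unless $W \subset O$. Concretely, $G := \{x \in X \setdelim x \notin O,\ f(x) < 0\}$: for each such $x$ pick open $W_x \ni x$ with $f < 0$ on $W_x \cap O^c$; then $W_x \setminus O$ is open and $f < 0$ on it, so $m(W_x \setminus O) = 0$, hence $W_x \subset O$ up to a null set, but $O$ is open and $\supp m = X$ forces $W_x \subset \overline{O}$... this is getting delicate.

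The cleaner route, which I would actually take: the set $A := \{x \in O^c \setdelim f(x) < 0\}$ is relatively open in $O^c$, so $A = O^c \cap W$ for some open $W \subset X$. Then $W \setminus O = W \cap O^c = A$, and $W \setminus O$ is open in $X$ (intersection of open $W$ with open... no). Let me just commit to: $W \setminus \overline{O}$ is open, contained in $O^c$, hence in $A$, so $f < 0$ on it, giving $m(W \setminus \overline{O}) = 0$; since $\supp m = X$ this forces $W \setminus \overline{O} = \emptyset$, i.e.\ $W \subset \overline{O}$, hence $A = W \cap O^c \subset \overline{O} \cap O^c = \partial O$. Therefore $N = N \cap O \cup N \cap O^c \subset O \cup \partial O = \overline{O}$. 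Finally I bound the capacity: $N \subset \overline{O}$ and one shows $\normcap(\overline{O})$ is controlled—here I would note that any $u \in \mathcal{L}_O$ with $u \geq 1$ on an open $U \supset O$ automatically has $u \geq 1$ on $\overline{O} \cap U = \overline{O}$ when $U$ can be taken large, or more simply $\mathcal{L}_O \subset \mathcal{L}_{\overline O}$ is generally false, so instead I note $\normcap$ of a set is determined by open neighborhoods and $O$ itself is an open neighborhood of $\overline O$'s... no. The honest fix: redo the quasicontinuity choice to get $\normcap(O)\le\epsilon$ with $f|_{O^c}$ continuous, enlarge $O$ slightly to an open $O'$ with $\overline{O} \subset O'$ and $\normcap(O') \le 2\epsilon$ (possible by outer regularity of $\normcap$, or just absorb into $\epsilon$), concluding $N \subset O'$ so $\normcap(N) \le 2\epsilon$; letting $\epsilon \to 0$ gives $\normcap(N) = 0$.

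The main obstacle is exactly this topological bookkeeping around $\partial O$: a priori $N$ is only known to be disjoint from the \emph{interior} information carried by continuity on $O^c$, and one must prevent $N$ from "hiding" in the closed set $O^c$ along the boundary of $O$. The resolution uses $\supp m = X$ decisively (a nonempty open set has positive measure), together with monotonicity and countable subadditivity of $\normcap$ to pass from covering $N$ by $O$-plus-boundary to an honest capacity bound. I would present the argument cleanly by choosing, for the given $\epsilon$, an open $O \supset$ (bad set) with $\normcap(O)\le\epsilon$ directly, if the reduction permits, rather than tracking $\partial O$ separately.
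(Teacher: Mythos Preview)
Your reverse implication via \cref{lem:polar_implies_nullset} is fine, and you correctly isolate the key object: with $O$ open, $\normcap(O)\le\epsilon$, $f|_{O^c}$ continuous, the set $A:=N\cap O^c$ is relatively open in $O^c$, so $A=W\cap O^c$ for some open $W\subset X$. But at this point you miss the decisive (and elementary) observation and instead detour into a dead end.

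The step you are missing is that $O\cup A$ is \emph{already open in $X$}: indeed $O\cup(W\cap O^c)=O\cup W$. You even write down ``$W\setminus O=W\cap O^c=A$'' and then back off because $O^c$ is closed; the point is not that $W\cap O^c$ is open, but that its union with $O$ is. Thus $O':=O\cup N=O\cup A$ is open and contains $N$. Since $m(N)=0$ and the elements of $\mathcal{L}_O$ are $L^2$-classes, any $u\in\mathcal{L}_O$ with $u\ge 1$ a.e.\ on an open $V\supset O$ satisfies $u\ge 1$ a.e.\ on the open set $V\cup O'\supset O'$; hence $\mathcal{L}_O=\mathcal{L}_{O'}$ and $\normcap(N)\le\normcap(O')=\normcap(O)\le\epsilon$. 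This is exactly the paper's argument, and it requires neither $\supp m=X$ nor any boundary analysis.

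Your alternative route only yields $N\subset\overline{O}$, and then you need $\normcap(\overline{O})$ small. The proposed fix---``enlarge $O$ to an open $O'\supset\overline{O}$ with $\normcap(O')\le 2\epsilon$ by outer regularity''---is not justified: no such outer regularity has been established, and in general one cannot pass from $\normcap(O)$ to $\normcap(\overline{O})$ (think of $O$ a dense open set of small capacity whose closure is all of $X$). So as written the forward direction has a genuine gap; the repair is precisely the one-line openness observation above.
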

\begin{proof}
Let us assume \(f \geq 0 \text{ a.e. on U}\). We choose an \(\epsilon>0\) and an open set \(O \subset X\) as in the definition of quasicontinuity. We know that \(N= \{f < 0 \} \cap U\) has measure \(0\). First observe that f is continuous on \(O^c\) and thus \(O^c \cap \{f<0\}\) is open in \(O^c\). This shows that 
\begin{align*}
    O'=O \cup N= O \cup \left( (N \cap O )\cup (N\cap O^c)\right)= O \cup (N\cap O^c)
\end{align*}  is open in \(X\) by the definition of the induced topology. Thus, \(\normcap(O') = \normcap(O)\), since \(\mathcal{L}_O=\mathcal{L}_{O'}\). Therefore
\begin{align*}
\normcap(N) \leq \normcap(O') = \normcap(O) < \epsilon.
\end{align*}
Since \(\epsilon\) was arbitrary we have \(\normcap(N)=0\). The other direction follows directly from \cref{lem:polar_implies_nullset}.

\end{proof}

\begin{corollary}\label{thm:unique_qc_representatives}
Let \(f_1 , f_2:X \rightarrow \mathbb{R}\) be two quasicontinuous representatives of some \(f \in \lspace\). Then \(f_1=f_2\) quasi everywhere on \(X\).
\end{corollary}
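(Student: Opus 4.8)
The plan is to reduce the statement to the previous theorem by considering the difference $f_1 - f_2$ and showing it is both quasicontinuous and equal to zero almost everywhere, hence zero quasi everywhere. First I would observe that the set $\{f_1 \neq f_2\}$ has measure zero, since $f_1$ and $f_2$ are both representatives of the same element $f \in \lspace$. Next I would argue that $f_1 - f_2$ is quasicontinuous: given $\epsilon > 0$, pick open sets $O_1, O_2$ with $\normcap(O_i) \leq \epsilon/2$ such that $f_i|_{O_i^c}$ is continuous; then on $(O_1 \cup O_2)^c = O_1^c \cap O_2^c$ both $f_1$ and $f_2$ are continuous, so their difference is continuous there, and by subadditivity $\normcap(O_1 \cup O_2) \leq \epsilon$.

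Having established that $g \coloneqq f_1 - f_2$ is quasicontinuous with $g = 0$ almost everywhere on $X$, I would apply \cref{aeqe} with $U = X$ to both $g \geq 0$ and $-g \geq 0$. Since $g = 0$ a.e.\ gives $g \geq 0$ a.e.\ on $X$ and $-g \geq 0$ a.e.\ on $X$, the theorem yields $g \geq 0$ q.e.\ and $g \leq 0$ q.e.\ on $X$. The sets $\{g < 0\}$ and $\{g > 0\}$ are therefore polar, and since a union of two polar sets is polar (by subadditivity of $\normcap$), the set $\{g \neq 0\} = \{f_1 \neq f_2\}$ is polar, i.e.\ $f_1 = f_2$ quasi everywhere.

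I do not expect a genuine obstacle here; the only point requiring mild care is the verification that the difference of two quasicontinuous functions is quasicontinuous, which relies on the finite subadditivity of the capacity established earlier, and the fact that restricting two continuous functions to a common closed subset keeps their difference continuous there. Everything else is a direct citation of \cref{aeqe}.
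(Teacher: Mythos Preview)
Your proposal is correct and follows exactly the route the paper intends: the corollary is stated without proof immediately after \cref{aeqe}, and your argument---showing that $g=f_1-f_2$ is quasicontinuous, that $g=0$ a.e.\ on the open set $U=X$, and then applying \cref{aeqe} to both $g$ and $-g$---is precisely the standard derivation. The auxiliary checks you flag (quasicontinuity of a difference via subadditivity of $\normcap$, and polarity of the union of two polar sets) are routine and exactly what is needed.
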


\begin{defn}
We call a set \(U \subset X\) \underline{quasi open} if, for every \(\epsilon>0\) there is an open set \(O_\epsilon\subset X\) such that \(U \subset O_\epsilon\) and \(\normcap(O_\epsilon \setminus U) \leq \epsilon\).
\end{defn}

\begin{lem}
A function \(f \in \lspace\) is quasicontinuous if and only if, for every open set \(U \subset \mathbb{R}\), \(f^{-1}(U)\) is quasi open.
\end{lem}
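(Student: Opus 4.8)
The plan is to prove the two implications separately, using only the definitions of quasicontinuity and quasi-openness together with the subadditivity properties of $\normcap$ established above; the one genuinely new ingredient is a countable base of the topology of $\mathbb{R}$, which is needed only for the ``only if'' direction. Throughout, $f$ denotes the distinguished representative $f : X \to \mathbb{R}$ under consideration.

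For the direction ``$f$ quasicontinuous $\Rightarrow$ every $f^{-1}(U)$ quasi open'', I would fix an open $U \subset \mathbb{R}$ and $\epsilon > 0$, pick an open $O \subset X$ with $\normcap(O) \le \epsilon$ and $f|_{O^c}$ continuous, and note that $f^{-1}(U) \cap O^c = (f|_{O^c})^{-1}(U)$ is open in the subspace $O^c$, hence equals $V \cap O^c$ for some open $V \subset X$. Setting $O_\epsilon \coloneqq V \cup O$ gives an open set containing $f^{-1}(U)$, since any point of $f^{-1}(U)$ lies in $O$ or in $V \cap O^c \subset V$. A short check shows $O_\epsilon \setminus f^{-1}(U) \subset O$: a point of $V$ lying outside $f^{-1}(U)$ cannot lie in $O^c$, because $V \cap O^c = f^{-1}(U) \cap O^c$, so it lies in $O$. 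Monotonicity of $\normcap$ then gives $\normcap(O_\epsilon \setminus f^{-1}(U)) \le \normcap(O) \le \epsilon$, which is exactly quasi-openness of $f^{-1}(U)$.

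For the converse I would fix a countable base $(U_n)_{n \in \mathbb{N}}$ of the topology of $\mathbb{R}$ (for instance the open intervals with rational endpoints) and $\epsilon > 0$. Since each $f^{-1}(U_n)$ is quasi open, there is an open $O_n \supset f^{-1}(U_n)$ with $\normcap(O_n \setminus f^{-1}(U_n)) < \epsilon 2^{-n-1}$; by the definition of the capacity there is then $u_n \in \mathcal{L}_{O_n \setminus f^{-1}(U_n)}$ with $\Vert u_n \Vert_\D < \epsilon 2^{-n}$, and $u_n \ge 1$ on some open $W_n$ with $O_n \setminus f^{-1}(U_n) \subset W_n$, so $u_n \in \mathcal{L}_{W_n}$ and $\normcap(W_n) < \epsilon 2^{-n}$. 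Put $O \coloneqq \bigcup_n W_n$; countable subadditivity of $\normcap$ yields $\normcap(O) \le \sum_n \normcap(W_n) \le \epsilon$. To see that $f|_{O^c}$ is continuous at an arbitrary $x \in O^c$, take an open neighbourhood $V$ of $f(x)$ in $\mathbb{R}$, choose $n$ with $f(x) \in U_n \subset V$, and observe $x \in f^{-1}(U_n) \subset O_n$, so $O_n \cap O^c$ is a neighbourhood of $x$ in $O^c$; any $y \in O_n \cap O^c$ satisfies $y \notin W_n$, hence $y \notin O_n \setminus f^{-1}(U_n)$, and since $y \in O_n$ this forces $y \in f^{-1}(U_n)$, i.e. $f(y) \in U_n \subset V$. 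As $x \in O^c$ and $\epsilon > 0$ were arbitrary, $f$ is quasicontinuous.

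The main obstacle is the reverse implication, and specifically the fact that the ``error sets'' $O_n \setminus f^{-1}(U_n)$ produced by quasi-openness need not be open: one must first enclose each of them in an open set of comparably small capacity — which is immediate from the definitions of $\normcap$ and $\mathcal{L}_A$ — before taking the countable union and invoking the subadditivity lemma. Everything after that is elementary bookkeeping with the subspace topology on $O^c$.
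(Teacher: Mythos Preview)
Your proof is correct and is precisely the standard argument; the paper itself does not spell it out but simply refers to the bilinear case in \cite{FOT_Dirichlet_forms_Markov_Processes}, whose proof proceeds along the same lines. The only remark worth making is that the ``obstacle'' you flag in the reverse direction is already built into the paper's definition of $\normcap$: since $\mathcal{L}_A$ consists of functions that are $\ge 1$ on an open neighbourhood of $A$, one has $\normcap(A)=\inf\{\normcap(W):W\text{ open},\ A\subset W\}$ automatically, so the passage from $O_n\setminus f^{-1}(U_n)$ to the open $W_n$ is immediate.
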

\begin{proof}
The proof works exactly like the one in the bilinear case \cite{FOT_Dirichlet_forms_Markov_Processes}.
\end{proof}

\begin{thm} \label{thm:cheb_inequality}
Let \(f \in \mathfrak{D}\) be a quasicontinuous function and \(\lambda>0\). Then 
\begin{align*}
\normcap(\{|f| > \lambda\}) \leq \lambda^{-1} \Vert f \Vert_\D.
\end{align*}
\end{thm}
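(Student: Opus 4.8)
The plan is to produce, for every $\epsilon>0$, an admissible function $u\in\mathcal{L}_{\{|f|>\lambda\}}$ with $\|u\|_\D\le\lambda^{-1}\|f\|_\D+\epsilon$, and then let $\epsilon\to 0$. The naive choice $u=|f|/\lambda$ is $\ge 1$ exactly on $\{|f|>\lambda\}$, but this set need not be open, so $|f|/\lambda$ is in general not admissible; quasicontinuity of $f$ is precisely what lets us enlarge this set to an open one at arbitrarily small cost in capacity.

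Before the main argument I would record two preliminaries. (a) $\| |f| \|_\D\le\|f\|_\D$: since $\E$ is a symmetric Dirichlet form, $f\wedge(-f)=-|f|$ and $f\vee(-f)=|f|$, so the first inequality in \cref{def:dirichlet_form} applied to the pair $(f,-f)$, together with $\E(-x)=\E(x)$, gives $2\E(|f|)=\E(-|f|)+\E(|f|)\le\E(f)+\E(-f)=2\E(f)$; since $\| |f| \|_H=\|f\|_H$ the same reasoning applied to $f/\mu$ shows $\E_1(|f|/\mu)\le\E_1(f/\mu)$ for every $\mu>0$, hence $\| |f| \|_\D\le\|f\|_\D$. (b) By \cref{Thm:D_Riesz_Subspace}, $|f|=f\vee(-f)\in\D$, so $|f|/\lambda\in\D$; and since $a\vee b=-\bigl((-a)\wedge(-b)\bigr)$ and $\|\cdot\|_\D$ is a norm, \cref{Thm:D_Riesz_Subspace_Norm} yields $\|a\vee b\|_\D\le\|a\|_\D+\|b\|_\D$ for all $a,b\in\D$.

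Now set $A=\{|f|>\lambda\}$. Applied to the fixed quasicontinuous representative of $f$ and the open set $(-\infty,-\lambda)\cup(\lambda,\infty)\subset\mathbb{R}$, the lemma characterizing quasicontinuous functions through quasi-open preimages shows that $A$ is quasi open. Fix $\epsilon>0$. Choose an open $O_\epsilon\supset A$ with $\normcap(O_\epsilon\setminus A)<\epsilon$, and then, by definition of the capacity, a function $h\in\mathcal{L}_{O_\epsilon\setminus A}$ with $\|h\|_\D<\epsilon$; thus $h\ge 1$ on some open $U\supset O_\epsilon\setminus A$, and $\|h\|_\D<\infty$ forces $h\in\D$. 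Put $u=(|f|/\lambda)\vee h$, which lies in $\D$ because $\D$ is a Riesz subspace. For $x\in O_\epsilon$ either $x\in A$, so $|f(x)|/\lambda>1$ and $u(x)>1$, or $x\in O_\epsilon\setminus A\subset U$, so $h(x)\ge 1$ and $u(x)\ge 1$; hence $u\ge 1$ on the open set $O_\epsilon\supset A$, i.e.\ $u\in\mathcal{L}_A$. Therefore, using (a) and (b),
\[\normcap(A)\le\|u\|_\D\le\tfrac{1}{\lambda}\| |f| \|_\D+\|h\|_\D<\tfrac{1}{\lambda}\|f\|_\D+\epsilon,\]
and letting $\epsilon\to 0$ proves the claim.

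The only genuine difficulty is the non-openness of $\{|f|>\lambda\}$, dealt with by quasicontinuity together with a small capacity potential $h$ for the discrepancy $O_\epsilon\setminus A$; everything else is bookkeeping with lattice inequalities already established. The point I would be most careful about is that admissibility only requires $u\ge 1$ on \emph{some} open set containing $A$ — here $O_\epsilon$ — so one never needs $A\cup U$ itself to be open, and the estimate $\| |f| \|_\D\le\|f\|_\D$ is what prevents a spurious factor of $2$.
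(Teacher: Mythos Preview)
Your proof is correct and follows the same strategy as the paper: enlarge $\{|f|>\lambda\}$ to an open set at arbitrarily small capacity cost and test the capacity with the supremum of $|f|/\lambda$ and a small potential for the discrepancy. Two minor differences are worth noting. First, you reach the open enlargement via the quasi-open characterization lemma, whereas the paper works directly from the definition of quasicontinuity and takes $O=\{|f|>\lambda\}\cup U_\epsilon$; these amount to the same thing. Second---and here your version is actually more careful---you use $|f|/\lambda$ and supply the preliminary $\|\,|f|\,\|_\D\le\|f\|_\D$, while the paper's printed proof tests with $(\lambda^{-1}f)\vee g_\epsilon$ rather than $(\lambda^{-1}|f|)\vee g_\epsilon$; on $\{f<-\lambda\}\setminus U_\epsilon$ that function need not be $\ge 1$, so your preliminary (a) is exactly what closes this small gap.
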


\begin{proof}
Let \(\lambda>0\) and \(\epsilon > 0 \). Since \(f\) is quasicontinuous, there is an open set \(U_\epsilon \) with \( \normcap(U_\epsilon) \leq \epsilon \) and \(f\) is continuous on \(U_\epsilon^c\). There is a function \(g_\epsilon\), such that \(g_\epsilon\geq 1\) on \(U_\epsilon\) and \(\Vert g_\epsilon\Vert_\D \leq 2 \epsilon \). Additionally, the set \( O= \{ |f|> \lambda \} \cup U_\epsilon \) is open. Note that \((\lambda^{-1} f)\vee g_\epsilon \geq 1\) on \(O\). Hence,
\begin{align*}
\normcap(\{|f| > \lambda\}) &\leq \normcap(O) \leq \Vert(\lambda^{-1} f)\vee g_\epsilon\Vert_\D\\ &\leq  \Vert \lambda^{-1} f\Vert_\D + \Vert g_\epsilon\Vert_\D \leq \lambda^{-1} \Vert f\Vert_\D + 2\epsilon.
\end{align*}
Since \(\epsilon\) is arbitrary, this implies the claim.
\end{proof}

\begin{thm}\label{thm:pointwise_quasi_everywhere}
Let \((f_n)_n\) be a sequence of quasicontinuous functions in \(\mathfrak{D}\) and \(f \in \mathfrak{D}\) with \(f_n \rightarrow f \) in \( \mathfrak{D}\). Then \(f\) is quasicontinuous and there exists a subsequence which converges pointwise quasi everywhere and quasi uniformly, that is for every \(\epsilon >0\) there is a open set \(U \subset X\) such that \(\normcap(U)\leq \epsilon\) and \(f_n\) converges uniformly to \(f\) on \(U^c\).
\end{thm}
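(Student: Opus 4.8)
The plan is to mimic the classical argument from the bilinear theory (as in \cite{FOT_Dirichlet_forms_Markov_Processes}), using the Chebyshev-type inequality \cref{thm:cheb_inequality} together with completeness of \(\D\) and countable subadditivity of the capacity. First I would pass to a rapidly converging subsequence: choose \((f_{n_k})_k\) with \(\Vert f_{n_{k+1}} - f_{n_k}\Vert_\D \leq 2^{-2k}\) for all \(k\). Writing \(g_k = f_{n_{k+1}} - f_{n_k} \in \D\) and noting that each \(g_k\) is quasicontinuous (a difference of quasicontinuous functions is quasicontinuous, which follows from the definition since the intersection of two small-capacity open sets is small by subadditivity), apply \cref{thm:cheb_inequality} with \(\lambda = 2^{-k}\) to get
\begin{align*}
\normcap\big(\{|g_k| > 2^{-k}\}\big) \leq 2^{k} \Vert g_k \Vert_\D \leq 2^{-k}.
\end{align*}

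Next I would build the exceptional open sets. For each \(k\), pick (using that capacity is an infimum over \(\mathcal{L}_{\{|g_k|>2^{-k}\}}\)) an open set \(O_k \supset \{|g_k| > 2^{-k}\}\) with \(\normcap(O_k) \leq 2^{-k+1}\); more precisely one works with the open superlevel set of a quasicontinuous representative and absorbs the auxiliary exceptional set of \(g_k\) into \(O_k\), exactly as in the proof of \cref{thm:cheb_inequality}. Put \(U_N = \bigcup_{k \geq N} O_k\), which is open with \(\normcap(U_N) \leq \sum_{k\geq N} 2^{-k+1} = 2^{-N+2}\) by countable subadditivity. On \(U_N^c\) we have \(|g_k| \leq 2^{-k}\) for all \(k \geq N\), so the series \(\sum_k g_k\) converges uniformly on \(U_N^c\); since its partial sums are \(f_{n_{k+1}} - f_{n_1}\), this shows \((f_{n_k})_k\) converges uniformly on \(U_N^c\) to some function, which is therefore continuous on \(U_N^c\) as a uniform limit of continuous functions. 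Setting \(\tilde f\) to be this pointwise limit on \(\bigcup_N U_N^c = X \setminus \bigcap_N U_N\) (and, say, \(0\) on the polar set \(\bigcap_N U_N\), which has capacity \(0\) hence measure \(0\) by \cref{lem:polar_implies_nullset}), one checks \(\tilde f\) is quasicontinuous: given \(\epsilon>0\), choose \(N\) with \(2^{-N+2} \leq \epsilon\), and \(\tilde f\) restricted to \(U_N^c\) is continuous.

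It remains to identify \(\tilde f\) with \(f\). Since \(f_{n_k} \to f\) in \(\D\) and the embedding \(\D \hookrightarrow \lspace\) is continuous (\cref{dirichlet space is banach}), \(f_{n_k} \to f\) in \(\lspace\), so a further subsequence converges to \(f\) \(m\)-a.e.; but \(f_{n_k} \to \tilde f\) pointwise off a polar (hence \(m\)-null) set, so \(\tilde f = f\) \(m\)-a.e., i.e. \(\tilde f\) is a quasicontinuous representative of \(f\), and \(f\) is quasicontinuous. The quasi-uniform convergence statement is exactly what was shown: for \(\epsilon>0\) take \(U = U_N\) with \(2^{-N+2}\le\epsilon\). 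I expect the main obstacle to be the bookkeeping in the second paragraph — correctly enlarging the superlevel sets \(\{|g_k|>2^{-k}\}\) to genuinely open sets of controlled capacity while handling the exceptional sets coming from quasicontinuity of the \(g_k\), and making sure the "continuous on \(U_N^c\)" conclusion is clean — rather than any conceptual difficulty; everything else is a routine adaptation of the bilinear proof.
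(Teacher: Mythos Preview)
Your proposal is correct and follows essentially the same route as the paper: pass to a rapidly Cauchy subsequence, apply the Chebyshev-type inequality \cref{thm:cheb_inequality} to the differences, use countable subadditivity to get small open exceptional sets \(U_N\), and conclude uniform convergence on \(U_N^c\). The paper's only cosmetic difference is that it extracts a second subsequence (with \(\Vert f_n - f\Vert_\D \leq 2^{-2n}\)) for the pointwise q.e.\ statement, whereas you get it from the same subsequence via the \(L^2\)-identification argument; your version is arguably tidier.

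One small point worth tightening in your bookkeeping paragraph: absorbing the exceptional set of \(g_k\) into \(O_k\) makes \(g_k\) continuous on \(O_k^c\), but to conclude that the uniform limit is continuous on \(U_N^c\) you need the \emph{functions} \(f_{n_k}\) themselves (or at least one of them, say \(f_{n_N}\), plus the \(g_k\) for \(k\ge N\)) to be continuous there. The paper handles this by separately choosing, for each \(f_n\), an open set \(U_n\) with \(\normcap(U_n)\le \epsilon 2^{-n}\) on whose complement \(f_n\) is continuous, and throwing \(\bigcup_n U_n\) into the exceptional set as well. You have correctly flagged this as the only genuine obstacle; just make sure you absorb the exceptional sets of the \(f_{n_k}\) (not only of the \(g_k\)) when you write it out.
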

\begin{proof}
Let us choose a subsequence \((f_n)_n\) such that
\begin{align*}
\Vert \vert f_n-f_{n+1}\vert \Vert_\D= \Vert f_n-f_{n+1} \Vert_\D \leq 2^{-2 n}.
\end{align*}
Then, since \(|f_n-f_{n+1}|\) is quasicontinuous, \cref{thm:cheb_inequality} yields
\begin{align*}
\normcap(\{|f_n-f_{n+1}|>2^{-n}\}) \leq 2^{-n}.
\end{align*}
Hence, for every \(m \in \mathbb{N}\) we have
\begin{align*}
\normcap(\bigcup_{n \geq m} \{|f_n-f_{n+1}|>2^{-n}\} ) \leq 2^{-m+1}.
\end{align*}
In addition, let \(\epsilon>0\). Since \(f_n\) is quasicontinuous, we can choose an open set \(U_n\) such that
\begin{align*}
    \normcap(U_n) \leq \epsilon 2^{-n}
\end{align*}
and \(f_n\vert_{U_n^c}\) is continuous. Hence,
\begin{align*}
\normcap(U)=\normcap(\bigcup U_n) \leq \epsilon.
\end{align*}
Therefore, the sequence \((f_n\vert_{U^c})_{n \in \mathbb{N}}\) of continuous functions converges uniformly on \(A_m= \bigcap_{n \geq m} \{|f_n-f|\leq 2^{-n}\}\). Thus, \(f\vert_{U^c \cap A_m}\) is continuous, since it is the uniform limit of continuous functions, and 
\begin{align*}
    \normcap\big((A_m\cap U^c)^c\big) &\leq \normcap\left(\bigcup_{n \geq m} \{|f_n-f_{n+1}|>2^{-n}\}\right)+\normcap(U) \\
    &\leq 2^{m-1}+\epsilon.
\end{align*}
This implies, that \(f\) is quasicontinuous.
Next, we show that a subsequence of \(f_n\) converges pointwise quasi everywhere. For this purpose, let us choose a different subsequence such that
\begin{align*}
\Vert \vert f_n-f\vert \Vert_\D= \Vert f_n-f \Vert_\D \leq 2^{-2 n}.
\end{align*}

Let \(m \in \mathbb{N}\) be arbitrary and \(x \in \bigcap_{n \geq m} \{|f_n-f|\leq 2^{-n}\}\). Then \(f_n(x) \rightarrow f(x)\). Thus, \(A=\{x\setdelim f_n(x) \nrightarrow f(x)\} \) is a subset of \(\bigcup_{n \geq m} \{|f_n-f|>2^{-n}\}\) for every \(m\). The Chebychev type inequality \cref{thm:cheb_inequality} yields
\begin{align*}
\normcap(A)\leq \normcap(\bigcup_{n \geq m} \{|f_n-f|>2^{-n}\} ) \leq 2^{-m+1}
\end{align*}
for every \(m\). Therefore 
\begin{align*}
\normcap(A)=0.
\end{align*}
\end{proof}

\begin{corollary}
Let \(f \in \overbar{\mathfrak{D} \cap C(X)}^\D\). Then \(f\) is quasicontinuous on \(X\).
\end{corollary}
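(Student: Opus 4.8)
The plan is to obtain this statement as an immediate consequence of \cref{thm:pointwise_quasi_everywhere}. The one preliminary observation needed is that every element of \(\mathfrak{D} \cap C(X)\) — by which we understand every \(g \in \mathfrak{D}\) admitting a continuous representative — is quasicontinuous. Indeed, in the definition of quasicontinuity one may simply take the open set \(O = \emptyset\) for each \(\epsilon > 0\): since \(\mathcal{L}_\emptyset = \lspace\) contains \(0\), we have \(\normcap(\emptyset) = 0 \le \epsilon\), while \(g\) restricted to \(\emptyset^c = X\) is continuous by hypothesis. Hence the continuous representative of \(g\) is a quasicontinuous representative, so \(g\) is quasicontinuous.

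Now let \(f \in \overbar{\mathfrak{D} \cap C(X)}^\D\) and choose a sequence \((f_n)_n\) in \(\mathfrak{D} \cap C(X)\) with \(f_n \rightarrow f\) in \(\mathfrak{D}\). By the previous paragraph, \((f_n)_n\) is a sequence of quasicontinuous functions in \(\mathfrak{D}\) converging to \(f\) in \(\mathfrak{D}\). Therefore \cref{thm:pointwise_quasi_everywhere} applies directly and yields that \(f\) is quasicontinuous; as a bonus, it also produces a subsequence of \((f_n)_n\) converging to \(f\) pointwise quasi everywhere and quasi uniformly.

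There is essentially no obstacle in this argument: the only point requiring a word of care is the triviality \(\normcap(\emptyset) = 0\), which is precisely what places the continuous functions inside the class of quasicontinuous functions in \(\mathfrak{D}\) to which \cref{thm:pointwise_quasi_everywhere} can be applied.
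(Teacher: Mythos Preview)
Your proof is correct and follows exactly the paper's approach: observe that continuous functions are quasicontinuous and then apply \cref{thm:pointwise_quasi_everywhere}. The only difference is that you spell out the easy fact \(\normcap(\emptyset)=0\), which the paper leaves implicit.
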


\begin{proof}
Every continuous function is quasicontinuous. Hence, the previous theorem implies the claim.
\end{proof}

\begin{remark}
For \(\E=\Vert \cdot \Vert_2\) this theorem is a version of Lusin's Theorem and Egorov's theorem. Furthermore, \cref{thm:cheb_inequality} is a version of the Markov or weak \(L^1\) inequality.
\end{remark}

\bibliography{phdbib}
\bibliographystyle{alpha}
\end{document}